\renewcommand{\d}{\mathrm{d}}
\newcommand{\vect}[1]{\boldsymbol{#1}}
\newtheorem{ass}{Assumption}
\newtheorem{thm}{Theorem}
\newtheorem{lemma}{Lemma}
\newtheorem{rmk}{Remark}
\numberwithin{equation}{section}
\renewcommand{\d}{\mathrm{d}}
\def\GS{\color{black}} 
\def\B{\color{black}}
\newcommand{\ds}{\ d s}
\renewcommand{\S}{\mathcal{S}}
\newcommand{\matb}[1]{\mathbf{#1}}
 \newcommand{\trunc}[1]{\mathsf{#1}}
\def\R1{\color{black}}
\newcommand{\numptc}{\mathcal{N}_{ptc}}
\newcommand{\numsub}{\mathcal{N}_{sub}}
\begin{document}

\begin{frontmatter}

\title {A low-rank solver for conforming multipatch Isogeometric Analysis}

 \author[mat,imati]{Monica Montardini\corref{cor1}} \ead{monica.montardini@unipv.it}
\author[mat,imati]{Giancarlo Sangalli} \ead{giancarlo.sangalli@unipv.it}
\author[mat,imati]{Mattia Tani} \ead{mattia.tani@unipv.it}

\address[mat]{Dipartimento di Matematica ``F. Casorati", Universit\`{a} di Pavia, Via A. Ferrata, 5, 27100 Pavia, Italy.}

\address[imati]{Istituto di Matematica Applicata e Tecnologie Informatiche, ``E. Magenes" del CNR, Via A. Ferrata, 1, 27100 Pavia, Italy.}

 \cortext[cor1]{Corresponding author}




%

\begin{abstract} { 
In this paper, we propose an innovative isogeometric low-rank solver
for the linear elasticity model problem, specifically designed to
allow multipatch domains. Our approach splits the domain into
subdomains, each formed by the union of neighboring patches. Within
each subdomain, we employ Tucker low-rank matrices and vectors to
approximate the system matrices and right-hand side vectors,
respectively. This enables the construction of local approximate fast
solvers. These local solvers are then combined into an overlapping
Schwarz preconditioner, which is utilized in a truncated
preconditioned conjugate gradient method. Numerical experiments
demonstrate the significant memory storage benefits and a  uniformly bounded number of iterations with respect to both mesh size and spline degree.}
\end{abstract}

\begin{keyword}
Isogeometric Analysis \sep Truncated Preconditioned Conjugate Gradient method \sep low-rank Tucker tensors \sep conforming multipatch
\end{keyword}

\end{frontmatter}
 
 \section{Introduction}

 Isogeometric Analysis (IgA) has seen substantial advancements since the publication of the seminal work \cite{Hughes2005}. Among its many advantages, IgA stands out as an effective and versatile high-order method for approximating partial differential equations (PDEs). Indeed, IgA benefits from
 the approximation properties of splines, which, especially in the
 case of maximal regularity (splines of polynomial degree $p$ and
 continuity $C^{p-1}$) exhibits approximation properties superior to
 those of classical $C^0$ or discontinuous piecewise polynomials  (see
 for example \cite{Bressan2019a} and  \cite{Evans2009}).
 
As with all high-order numerical methods, the development of efficient
solvers is crucial, and this has been an active area of research
within the IgA community. Among the various approaches proposed,
low-rank compression techniques have recently garnered
attention. These methods take advantage of the tensorial structure
inherent in isogeometric spaces at the patch level. See, for example, 
\cite{Mantzaflaris2017,Hofreither2018,Juttler2017,Pan2019}, where the authors use a low-rank
representation of  the isogeometric  Galerkin matrices.  Low-rank
tensor methods have been also exploited in the
solution of IgA linear systems in
\cite{Georgieva2019,Montardini2023}, based on the
Tucker format, and in \cite{Bunger2020}, that uses the  tensor-trains
approximation of the unknown. {\GS
 In all these papers, a single tensorial patch is considered. This is a limitation that we aim to overcome.

In this work, we indeed consider a multipatch domain. This extension
is non-trivial since the multipatch setting is generally not globally
tensorial, even when the individual patches are, as we assume. The use
of classical domain decomposition techniques allows leveraging
tensoriality at the level of local problems but not at the global
level, see our previous work \cite{Bosy2020}.} To circumvent this issue, in the present work, Instead of representing the unknown in the  usual multipatch basis (obtained by a continuous gluing of the individual
  patch bases)  we construct  appropriate tensorial subdomains by
  merging neighboring patches, and  represent  the global unknown as
  the disjoint union of the unknowns of the subdomains. This
  representation is not unique, leading to a singular
  linear system. However, the advantage is that,  on each subdomain,
  we can use a solver that leverages tensorial properties, and
  construct then a block diagonal preconditioner for the global
  system. This is  equivalent, in the full-rank case, to an
  overlapping Schwarz preconditioner for the system written in the
  standard basis, see \cite{BMS-arXiv}.
An additional benefit is that the global
unknown admits a low-rank
approximation on each  subdomain. This allows us to  use the low-rank
preconditioner from \cite{Montardini2023} on each subdomain and, a truncated preconditioned conjugate gradient solver to
solve our global  singular linear system.

It is known that Krylov solvers perform well with
singular systems, provided that the right-hand side lies in the range
of the system matrix, see
e.g. \cite{Kaasschieter1988,Ipsen1998,Reichel2005}. However, in our
approach, low-rank compression is performed at each step, meaning that the iterates are not computed
exactly: in this context, convergence is less understood. 
We also observe that truncated Krylov methods have only been
introduced recently, and their theory is still under development, even
for non-singular systems, see \cite{Kressner2011, Palitta2021,Simoncini2023}. Although there are gaps in the mathematical analysis
and therefore the convergence of the linear solver we use in this work
is not proved, our benchmarks show that the proposed method
works well,  similarly to the single-patch non-singular case considered
in \cite{Montardini2023}. Moreover, it achieves substantial memory compression, reducing storage requirements by up to two orders of magnitude compared to the full-rank case.

{ \GS  In the context of Isogeometric Analysis, only one work, the
  preprint \cite{Bunger2023}, shares the objective of this paper:
  addressing multipatch isogeometric problems by combining low-rank
  techniques with a domain decomposition approach. However, it employs
  a different tensor format (tensor trains) and a non-overlapping
  domain decomposition method in a simplified setting.

  Beyond Isogeometric Analysis, there is significant research activity
  focused on the use of low-rank methods for solving PDEs.  We refer to the recent survey
  \cite{Bachmayr2023} for a throughout discussion on this topic. In particular, several low-rank solvers have been proposed in the literature, including Krylov
  subspace methods with rank truncation \cite{Khoromskij2011,
    Kressner2011, Ballani2013, Billaud2014, Lee2017, Ali2020}, alternating direction approaches
  \cite{Oseledets2012, Dolgov2014, Oseledets2018}, Riemannian
  optimization \cite{Kressner2016, Sutti2024}, and greedy strategies \cite{Ammar2010, Cances2011}. The target problems of these
  works are often parametrized or stochastic PDEs, and the focus is on
  high dimension. We emphasize, however, that in all these references
  PDEs are solved on simple tensor-product domains. To the best of our
  knowledge, the only work proposing a low-rank solver for complex
  domains is \cite{Markeeva2021} (and its follow-up
  \cite{Kornev2024}), but this approach,  different from the one
  presented here,  is restricted to 2-dimensional problems.
}

The paper is organized as follows. In Section \ref{sec:preliminaries}
we present the basics of IgA and of tensor calculus. The core of the
paper is Section~\ref{sec:lr_multipatch}, where we propose the novel
multipatch low-rank strategy. 
 In Section \ref{sec:analysis} we present a  theoretical
  study on the error committed on the system matrix due to the
  low-rank approximation of the geometry.
We present some numerical experiments in Section \ref{sec:numerics} while in the last section we draw some conclusions.

\section{Preliminaries}
\label{sec:preliminaries}
\subsection{B-Splines} 
\label{sec:bsplines}
Let $m$ and $p$ be two positive integers. Then, a knot vector in $[0,1]$ is a set of non-decreasing points $\Xi:=\{\xi_1,\ldots,\xi_{m+p+1}\}$. We consider only open knot vectors, i.e. we set $\xi_1=\dots=\xi_{p+1}=0$ and $\xi_{m+1}=\dots\xi_{m+p+1}=1$. Besides the first and the last ones, knots can be repeated up to multiplicity $p$.

Univariate B-splines are piecewise polynomials $\widehat{b}_{i}:[0,1]\longrightarrow\mathbb{R}$ for $i=1,\dots,m$ of degree $p$ that can be defined from the knot vector $\Xi$ according to Cox-De Boor recursion formulas \cite{DeBoor2001}. We define the mesh-size $h:=\max \{|\xi_{i+1}-\xi_i| \ | \ i=1,\dots,m+p\}$ and we denote the univariate spline space as
$$
\widehat{\mathcal{S}}^p_h:= \text{span} \{\widehat{b}_{i}\ |\ i=1,\dots m\}.
$$
For B-splines properties, we refer to \cite{Cottrell2009}. Multivariate  B-splines are defined by tensor product of univariate B-splines. In this paper we focus on three-dimensional problems. Thus, given $m_d$ and $p_d$ positive integers, we introduce for $d=1,2,3$ the open knot vectors $\Xi_d:=\{\xi_{1,d},\dots,\xi_{m_d+p_d+1,d}\}$, the corresponding B-splines $\widehat{b}_{d,i_d}$, $i_d=1,\ldots,m_d$, mesh-sizes $h_d$ and univariate spline spaces $\widehat{\S}^{p}_{h_{d}}$. The maximum of the mesh-sizes is denoted as $h:=\max\{h_d\ | \ d=1,2,3\}$. Note that for simplicity we are assuming that all univariate B-splines have the same degree.
Multivariate B-splines $\widehat{B}_{ \vect{i}}: [0,1]^3 \rightarrow\mathbb{R} $ are defined as
$$
\widehat{B}_{ \vect{i}}(\underline{\xi}) : =
\widehat{b}_{1,i_1}(\xi_1)   \widehat{b}_{2,i_2}(\xi_2)\widehat{b}_{3,i_3}(\xi_3),
$$
where $\vect{i}:=(i_1,i_2,i_3)$ is a multi-index and  $\underline{\xi} = (\xi_1, \xi_2, \xi_3)$. We define the corresponding spline space as
\begin{equation*}\widehat{\boldsymbol{\S}}^{p}_{h}  := \mathrm{span}\left\{\widehat{B}_{\vect{i}} \ \middle| \text{where } \vect{i}:=(i_1,i_2,i_3)\text{ and }\ i_d = 1,\dots, m_d \text{ for } l=1,2,3 \right\}= {\widehat{\S}^{p}_{h_{1}}\otimes \widehat{\S}^{p}_{h_{2}}\otimes\widehat{\S}^{p}_{h_{3}}}.
\end{equation*}  

\subsection{Isogeometric spaces on a multipatch domain}
\label{sec:iso_spaces}
Let $\Omega\subset\mathbb{R}^3$ represent the computational domain, and we assume that it can be written as the union of $\numptc$ non-overlapping closed sets, called patches, i.e. 
$$ \Omega = \bigcup_{i=1}^{\numptc}\Omega^{(i)} $$  
and ${\Omega}^{(i)} \cap\Omega^{(j)}$ has empty interior for $i\neq j$. 
Let $\Gamma:= \left( \bigcup_{i=1}^{\numptc} \partial\Omega^{(i)} \right) \setminus \partial \Omega$ denote the union of patch interfaces.

For a fixed $i \in \left\lbrace 1,\ldots,\numptc \right\rbrace $, we introduce three positive integers
$m_1^{(i)},m_2^{(i)},m_3^{(i)}$ and three open knot vectors $\Xi^{(i)}_d:=\{ \xi_{1,d}^{(i)}, \ldots, \xi^{(i)}_{m_d^{(i)}+p+1,d}\}$, for $d=1,2,3$,where $p$ is another positive integer which does not depend on $i$ and $d$. Let $h^{(i)}_d$ denote the mesh-size of $\Xi^{(i)}_d$ for $d=1,2,3$. Moreover, we introduce $h:=\max\{h_d^{(j)}\ | \ j=1,\dots,\numptc, \; d=1,2,3\}$.
%
%
The multivariate spline space associated to each patch is denoted as  $\widehat{\boldsymbol{\S}}_{ptc}^{(i)}$. 
We assume that for each patch $\Omega^{(i)}$ there exists a  non-singular parametrization $\mathcal{F}_i\in \left[\widehat{\boldsymbol{\S}}_{ptc}^{(i)}\right]^3$ whose image is $\Omega^{(i)}$, i.e. $\Omega^{(i)}=\mathcal{F}_i([0,1]^3)$ and the Jacobian matrix $J_{\mathcal{F}_i}$ is invertible everywhere. A face of a given patch $\Omega^{(i)}$ is the image through $\mathcal{F}_i$ of a face of the unit cube. Similarly, an edge of $\Omega^{(i)}$ is the image through $\mathcal{F}_i$ of an edge of the unit cube.

Let $\partial\Omega_D \subseteq \partial \Omega$, which will represent the portion of $\partial\Omega$ endowed with Dirichlet boundary conditions. We need the following assumption.
\begin{ass}
 \label{ass:bc}
We assume that $\partial \Omega_D\cap\partial\Omega^{(i)} $ is either an empty set or an entire face of $\Omega^{(i)}$ for  $i=1,\dots,\numptc$.
\end{ass}

For $i=1,\ldots,\numptc$, we define $\widehat{\mathcal{\boldsymbol\S}}_{ptc,0}^{(i)}\subset \widehat{\mathcal{\boldsymbol\S}}_{ptc}^{(i)}$ as the space generated by the basis functions of $ \widehat{\mathcal{\boldsymbol\S}}_{ptc}^{(i)}$ whose image through $\mathcal{F}^{(i)}$ vanishes on $\partial\Omega_D$.
Thanks to Assumption \ref{ass:bc},  $\widehat{\mathcal{\boldsymbol\S}}_{ptc,0}^{(i)}$ is a tensor product space, i.e.
$$
\widehat{\mathcal{\boldsymbol\S}}_{ptc,0}^{(i)}:=\widehat{\mathcal{\S}}_{ptc,1}^{(i)}\otimes \widehat{\mathcal{\S}}_{ptc,2}^{(i)}\otimes \widehat{\mathcal{\S}}_{ptc,3}^{(i)},
$$
where $\widehat{\mathcal{\S}}_{ptc,d}^{(i)}:=\text{span}\left\{\widehat{b}^{(i)}_{d,j}\ | \  j=1,\dots,n^{(i)}_{ptc,d}\right\}$ are univariate spline spaces, with $ n^{(i)}_{ptc,d}:=\text{dim}\left(\widehat{\mathcal{\S}}_{ptc,d}^{(i)} \right)$, for $d=1,2,3$. 

Following the isoparametric concept, we define the local isogeometric space on the $i$-th patch as
$$
V_{ptc}^{(i)}:=\left\lbrace \widehat{v}_h \circ \mathcal{F}_{i}^{-1} \middle| \; \widehat{v}_h \in \widehat{\boldsymbol{\S}}_{ptc,0}^{(i)} \right\rbrace.
$$Note that $\dim\left( V^{(i)}_{ptc}\right) = \prod_{d=1}^3 n_{ptc,d}^{(i)} =: n_{ptc}^{(i)} $. Moreover, the isogeometric space over $\Omega$ is defined as
\begin{equation}
\label{eq:comp_space}
V_h:=\left\{v\in H^1(\Omega)\ \middle| \ v_{|_{\Omega^{(i)}}}\in V_{ptc}^{(i)},\ i=1,\dots,\numptc  \right\}.
\end{equation}

  Throughout the paper, we make the following assumption on the meshes.
\begin{ass}
\label{ass:conforming}
We assume that the intersection between two patches, when it is not empty,    is either a full face, a full edge or a vertex.
 Moreover, we require that the meshes are
conforming at the patch interfaces, i.e. for all $i$ and $j$ such that $\partial \Omega^{(i)}\cap\partial\Omega^{(j)}\neq 0$ and the intersection is not a point.   
\end{ass}
Under this assumption, we collect all the basis functions from all local spaces, and then identify those functions whose restriction to the interface $\Gamma$ assumes the same values and it is not identically zero. The resulting set of functions forms a basis for $V_h$. 

\subsection{Model problem}
 \label{sec:model_problem}
 Our model problem is the compressible linear elasticity problem. 
Let  $\partial \Omega=\partial\Omega_D\cup\partial\Omega_N$ with $\partial\Omega_D\cap \partial\Omega_N=\emptyset$ and where $\partial\Omega_D$ is non-empty and satisfies  Assumption \ref{ass:bc}.   
Let $H^1_D(\Omega):= \left\{v \in H^1(\Omega) \ \middle| \ v=0 \text{ on } \partial\Omega_D \right\}$. For simplicity, we consider only homogeneous Dirichlet boundary conditions, but the non-homogeneous case can be treated similarly. Then, given  $\underline{f} \in  [{L}^2(\Omega)]^3 $ and  $ \ \underline{g} \in   [{L}^2(\partial \Omega_N)]^3$,
we consider the Galerkin problem: find  $\underline{u}\in [H^1_D(\Omega)]^3$ such that for all   $\underline{v}\in [H^1_D(\Omega)]^3$ 
\begin{equation*}   
a(\underline{u},\underline{v})= F( \underline{v}),
\end{equation*}
where we define
\begin{align}
\label{eq:bil}
a(\underline{u},\underline{v}) & :=  2 \mu \int_{\Omega} \varepsilon(\underline{u}) : \varepsilon(\underline{v}) \d{\underline{x}} + \lambda \int_{\Omega} \left( \nabla \cdot \underline{u} \right) \left(\nabla \cdot \underline{v}\right)\; \d{\underline{x}}    , \\
\label{eq:F}
 F( \underline{v}) & :=  \int_{\Omega} \underline{f} \cdot \underline{v} \; \d{\underline{x}} + \int_{\partial \Omega_N} \underline{g}\cdot \underline{v} \ds. 
\end{align}

Here ${\varepsilon_{i,j}}(\underline{v}) = \frac{1}{2} \left( \frac{\partial u_i}{\partial x_j} + \frac{\partial u_j}{\partial x_i} \right)$ for $i,j=1,2,3$,   while   $\lambda$ and $\mu$ denote the material Lam\'{e} coefficients, that for simplicity we assume to be constant and positive and such that the Poisson ratio $\frac{\lambda}{2(\mu+\lambda)}$ is smaller than   0.5. The corresponding discrete Galerkin problem that we want to solve is the following: find $\underline{u}_h\in [V_h]^3$ such that 
\begin{equation} \label{eq:galerkin}
a(\underline{u}_h,\underline{v}_h)= F( \underline{v}_h), \qquad \text{for all } \underline{v}_h\in [V_h]^3, \end{equation}
where the space $V_h$ has been defined in  \eqref{eq:comp_space}. Note that $[V_h]^3 \subseteq [H^1_D(\Omega)]^3$.

\subsection{The Tucker format}
\label{sec:tensors}

Throughout the paper, the entries of a matrix $\matb{B} \in \mathbb{R}^{m_1 \times m_2}$ are denoted with $[\matb{B}]_{i,j}$, $i=1,\ldots,m_1$, $j=1,\ldots,m_2$. Similarly,the entries of a tensor $\mathfrak{B} \in \mathbb{R}^{m_1 \times m_2 \times m_3} $ are denoted with $[\mathfrak{B}]_{i,j,k}$, $i=1,\ldots,m_1$, $j=1,\ldots,m_2$, $k=1,\ldots,m_3$.

Given $\matb{B} \in \mathbb{R}^{m_1 \times m_2}$ and $\matb{C} \in \mathbb{R}^{n_1 \times n_2} $, their Kronecker product is defined as
$$ \matb{B} \otimes \matb{C} = \begin{bmatrix} [\matb{B}]_{1,1} \matb{C} & \ldots & [\matb{B}]_{1,m_2} \matb{C} \\ \vdots & \ddots & \vdots \\ [\matb{B}]_{m_1,1} \matb{C} & \ldots & [\matb{B}]_{m_1,m_2} \matb{C} \end{bmatrix} \in \mathbb{R}^{m_1 n_1 \times m_2 n_2}.$$
The notion of Kronecker product can be generalized to tensors of higher dimensions. In particular, given $\mathfrak{B} \in \mathbb{R}^{m_1 \times m_2 \times m_3}$ and  $\mathfrak{C} \in \mathbb{R}^{n_1 \times n_2 \times n_3}$, their Kronecker product is the tensor of $\mathbb{R}^{m_1 n_1 \times m_2 n_2 \times m_3 n_3}$ whose entries are
$$ [\mathfrak{B} \otimes \mathfrak{C}]_{k_1,k_2,k_3} = [\mathfrak{B}]_{i_1,i_2,i_3} [\mathfrak{C}]_{j_1,j_2,j_3} \quad \text{with} \quad k_d = j_d + (i_d-1)n_d \quad \text{for} \quad d=1,2,3. $$

In this paper we adopt the notation of \cite{Oseledets2009} and
say that a vector $\vect{v}\in\mathbb{R}^{n_1n_2n_3}$ is in Tucker format (or that it is a Tucker vector) when it is written as
\begin{equation} \label{eq:Tucker_vector} \vect{v} = \sum_{r_3=1}^{R_3^{\vect{v}}} \sum_{r_2=1}^{R_2^{\vect{v}}} \sum_{r_1=1}^{R_1^{\vect{v}}}  [\mathfrak{v}]_{r_1,r_2,r_3} v_{(3,r_3)} \otimes v_{(2,r_2)} \otimes v_{(1,r_1)}, \end{equation}
where $v_{(d,r_d)} \in \mathbb{R}^{n_d}$ for $d=1,2,3$ and $r_d = 1,\ldots,R_d^{\vect{v}}$, , while $\mathfrak{v} \in \mathbb{R}^{R_1^{\vect{v}} \times R_2^{\vect{v}} \times R_3^{\vect{v}}}$ is called the core tensor of $\vect{v}$. Here the triplet $\left( R_1^{\vect{v}},R_2^{\vect{v}},R_3^{\vect{v}}\right) $ is referred to as the multilinear rank of $\vect{v}$. Note that $\vect{v} \in \mathbb{R}^{n_1 n_2 n_3}$, and the memory required to store it as a Tucker vector is $R_1^{\vect{v}} R_2^{\vect{v}} R_3^{\vect{v}} + R_1^{\vect{v}} n_1 + R_2^{\vect{v}} n_2 + R_3^{\vect{v}} n_3$. This can be significantly lower than the standard $n_1 n_2 n_3$, provided that $R_d^{\vect{v}} \ll n_d$, for $d=1,2,3$.

Similarly, we say that a matrix $\matb{B}\in\mathbb{R}^{m_1 m_2 m_3\times n_1n_2n_3}$ is in Tucker format (or that it is a Tucker matrix) when it is written as
\begin{equation} \label{eq:Tucker_matrix} \matb{B} = \sum_{r_3=1}^{R_3^{\matb{B}}}  \sum_{r_2=1}^{R_2^{\matb{B}}} \sum_{r_1=1}^{R_1^{\matb{B}}}  [\mathfrak{B}]_{r_1,r_2,r_3} B_{(3,r_3)} \otimes B_{(2,r_2)} \otimes B_{(1,r_1)}, \end{equation}
where $B_{(d,r_d)} \in \mathbb{R}^{m_d \times n_d}$ for $d=1,2,3$, $r_d = 1,\ldots,R_d^{\matb{B}}$, and $\mathfrak{B} \in \mathbb{R}^{R_1^{\matb{B}} \times R_2^{\matb{B}} \times R_3^{\matb{B}}}$. 

The matrix-vector product between a Tucker matrix $\matb{B}$ as in \eqref{eq:Tucker_matrix} and a Tucker vector $\vect{v}$ as in \eqref{eq:Tucker_vector} can be efficiently computed since
\begin{align*} \matb{B} \vect{v} & = \sum_{r^{'}_1,r^{'}_2,r^{'}_3} \sum_{r^{''}_1,r^{''}_2,r^{''}_3} [\mathfrak{B}]_{r^{'}_1,r^{'}_2,r^{'}_3} [\mathfrak{v}]_{r^{''}_1,r^{''}_2,r^{''}_3} \left( B_{(3,r^{'}_3)} v_{(3,r^{''}_3)} \right)\otimes \left( B_{(2,r^{'}_2)} v_{(2,r^{''}_2)} \right) \otimes \left( B_{(1,r^{'}_1)} v_{(1,r^{''}_1)} \right) \\ & = \sum_{r_3=1}^{ R_3^{\matb{B}} R_3^{\vect{v}}} \sum_{r_2=1}^{ R_2^{\matb{B}} R_2^{\vect{v}}} \sum_{r_1=1}^{ R_1^{\matb{B}} R_1^{\vect{v}}}  [\mathfrak{B} \otimes \mathfrak{C}]_{r_1,r_2,r_3} \left( B_{(3,r^{'}_3)} v_{(3,r^{''}_3)} \right)\otimes \left( B_{(2,r^{'}_2)} v_{(2,r^{''}_2)} \right) \otimes \left( B_{(1,r^{'}_1)} v_{(1,r^{''}_1)} \right) \end{align*}
where, in the last line, $r^{'}_d$ and $r^{''}_d$ satisfy $r_d = r^{''}_d + \left(r^{'}_d - 1 \right) n_d $, for $d=1,2,3$.
Note that $\matb{B} \vect{v}$ is still a Tucker vector with multilinear rank $\left( R_1^{\matb{B}} R_1^{\vect{v}}, R_2^{\matb{B}} R_2^{\vect{v}}, R_3^{\matb{B}} R_3^{\vect{v}}\right) $.

Another operation where the Tucker structure can be exploited is the scalar product. Let $\vect{v}$ as in \eqref{eq:Tucker_vector} and let 
\begin{equation} \label{eq:Tucker_vector2} \vect{w} =  \sum_{r_3=1}^{R_3^{\vect{w}}}  \sum_{r_2=1}^{R_2^{\vect{w}}} \sum_{r_1=1}^{R_1^{\vect{w}}} [\mathfrak{w}]_{r_1,r_2,r_3} w_{(3,r_3)} \otimes w_{(2,r_2)} \otimes w_{(1,r_1)}, \end{equation}
where $w_{(d,r_d)} \in \mathbb{R}^{n_d}$ for $d=1,2,3$ and $r_d = 1,\ldots,R_d^{\vect{w}}$. Then
$$ \vect{v} \cdot \vect{w} = \sum_{r_3=1}^{R_3^{\vect{v}}}\sum_{r_2=1}^{R_2^{\vect{v}}} \sum_{r_1=1}^{R_1^{\vect{v}}}  \sum_{r'_3=1}^{R_3^{\vect{w}}}  \sum_{r'_2=1}^{R_2^{\vect{w}}}  \sum_{r'_1=1}^{R_1^{\vect{w}}}  [\mathfrak{v}]_{r_1,r_2,r_3}     [\mathfrak{w}]_{r^{'}_1,r^{'}_2,r^{'}_3} 
\prod_{d=1}^3 v_{(d,r_d)} \cdot w_{(d,r^{'}_d)} 
$$
The sum between two Tucker vectors is still a Tucker vector with multilinear rank equal to the sum of the multilinear ranks of the addends. Precisely, let $\vect{v}$ as in  \eqref{eq:Tucker_vector} and $\vect{w}$ as in \eqref{eq:Tucker_vector2}. Then
$$ \vect{z} = \vect{v} + \vect{w} = \sum_{r_3=1}^{R_3^{\vect{v}}+R_3^{\vect{w}}} \sum_{r_2=1}^{R_2^{\vect{v}}+R_2^{\vect{w}}} \sum_{r_1=1}^{R_1^{\vect{v}}+R_1^{\vect{w}}}  \mathfrak{z}_{r_1,r_2,r_3} z_{(3,r_3)} \otimes z_{(2,r_2)} \otimes z_{(1,r_1)}, $$
where
$$ z_{(d,r_d)} = \left\lbrace \begin{array}{ll} v_{(d,r_d)} & \text{for } r_d=1,\ldots,R_d^{\vect{v}} \\ w_{(d,r_d-R_d^{\vect{v}})} & \text{for } r_d=R_d^{\vect{v}} + 1,\ldots,R_d^{\vect{v}}+R_d^{\vect{w}}\end{array} \right. , \qquad d=1,2,3, $$
and where the core tensor $\mathfrak{z}$ is a block diagonal tensor defined by concatenating the core tensors of $\vect{v}$ and $\vect{w}$.
 
 \section{Low-rank multipatch method}
 \label{sec:lr_multipatch}

We introduce another subdivision of the computational domain, namely
let $\Theta^{(1)}, \ldots, \Theta^{(\mathcal{N}_{sub})}$
  be closed subsets of $  \Omega $  such that
\begin{equation} \label{eq:subdomains} \text{int} \left( \Omega \right) = \bigcup_{i = 1}^{\mathcal{N}_{sub}} \text{int} \left( \Theta^{(i)} \right). \end{equation}
where $ \text{int} \left( \cdot \right) $ denotes the interior of a set.
Here, differently from the patches defined in Section \ref{sec:iso_spaces}, the subdomains $\Theta^{(j)}$ are allowed to overlap, i.e. the intersection of two different subdomains can have a non-empty interior.

We denote with $V^{(i)}_{sub}$ the subspace of $V_h$ whose functions vanish outside $\Theta^{(i)}$, i.e.
$$ V^{(i)}_{sub} = \left\lbrace v_h \in V_h \middle| \; v_h = 0 \text{ on } \Omega \setminus \Theta^{(i)} \right\rbrace. $$
Note that \eqref{eq:subdomains} guarantees that 
$ V_h =  \bigcup_{i = 1}^{\mathcal{N}_{sub}} V^{(i)}_{sub} .$

As a basis for $V^{(i)}_{sub}$, we choose the set of basis functions of $V_h$ whose support is included in $\Theta^{(i)}$.
A crucial assumption is that each space $V^{(i)}_{sub}$ can be written as (the pushforward of) a tensor product spline space. { Precisely, we make the following assumption.

\begin{ass}
 \label{ass:sub}
 We assume that
\begin{equation} \label{eq:Vjsub}
V_{sub}^{(i)} = \left\lbrace \widehat{v}_h \circ \mathcal{G}_{i}^{-1} \middle| \; \widehat{v}_h \in \widehat{\boldsymbol{\S}}_{sub,0}^{(i)} \right\rbrace ,
\end{equation}
where $\mathcal{G}_{i} :[0,1]^3 \longrightarrow \Theta^{(i)} $ is a non-singular and differentiable map, while 
\begin{equation} \label{eq:S_sub}\widehat{\boldsymbol{\S}}_{sub,0}^{(i)} = \widehat{\S}_{sub,1}^{(i)} \otimes \widehat{\S}_{sub,2}^{(i)} \otimes \widehat{\S}_{sub,3}^{(i)} \end{equation} 
is a tensor product spline space. 
\end{ass}}

Here $\widehat{\S}_{sub,d}^{(i)}$, $d=1,2,3$, represent univariate spline spaces, whose dimension is denoted with $n_{sub,d}^{(i)}$. 
Note that $\dim \left( V_{sub}^{(i)}\right) = \prod_{d=1}^{3} n_{sub,d}^{(i)} =: n_{sub}^{(i)}$.

In the setting described in Section \ref{sec:iso_spaces}, the subdomains $\Theta^{(i)}$ and the corresponding spaces $V_{sub}^{(i)}$ satisfying \eqref{eq:Vjsub} can be obtained by merging isogeometric patches
as well as the corresponding spaces. 
We give details for this construction in Section \ref{sec:ptc_union}.





\subsection{The linear system}

We split the solution $\underline{u}_h \in \left[ V_h\right]^3$ of the
Galerkin problem \eqref{eq:galerkin} as
\begin{equation}
  \label{eq:solution_split}
  \underline{u}_h = \sum_{j}^{\mathcal{N}_{sub}} \underline{u}_h^{(j)}, \qquad \underline{u}_h^{(j)} \in \left[  V^{(j)}_{sub} \right]^3.
\end{equation}

The functions $
\underline{u}_h^{(1)}, \ldots, \underline{u}_h^{(\mathcal{N}_{sub})}$ represent the unknowns. Due to
the overlap between the subdomains, the subspaces $\left[
  V^{(1)}_{sub} \right]^3,\ldots,\left[ V^{(\mathcal{N}_{sub})}_{sub}
\right]^3$ are not necessarily in direct sum. As a result, 
$ \underline{u}_h^{(1)}, \ldots, \underline{u}_h^{(\mathcal{N}_{sub})}$
are not uniquely determined in general. Instead, they are selected by the Krylov
iterative solver, based on the initial iterate, without the need to
impose any average condition.

In order to set up the linear system, \B we consider the same splitting for the test functions, and on each space $\left[ V^{(j)}_{sub} \right]^3$ we choose as basis the set
$$ \left\lbrace \underline{e}_k B_i^{(j)} \; \middle| \; B_i^{(j)} \text{ basis function for } V^{(j)}_{sub}, \; k=1,2,3 \right\rbrace, $$
where $\underline{e}_k$ is the $k-$th vector of the canonical basis of
$\mathbb{R}^3$, $k=1,2,3$. Therefore, { 
  the solution of \eqref{eq:galerkin}  can be obtained
  from \eqref{eq:solution_split} and the linear system:}
\begin{equation} \label{eq:lin_sys}
\matb{A} \vect{u} = \begin{bmatrix} \matb{A}^{(1,1)} & \ldots & \matb{A}^{(1,\mathcal{N}_{sub})} \\ \vdots & & \vdots \\ \matb{A}^{(\mathcal{N}_{sub},1)} & \ldots & \matb{A}^{(\mathcal{N}_{sub},\mathcal{N}_{sub})} \end{bmatrix} \begin{bmatrix} \vect{u}^{(1)} \\ \vdots \\ \vect{u}^{(\mathcal{N}_{sub})} \end{bmatrix} = \begin{bmatrix} \vect{f}^{(1)} \\ \vdots \\ \vect{f}^{(\mathcal{N}_{sub})} \end{bmatrix}
= \vect{f},
\end{equation}
where, for $i,j=1,\ldots,\mathcal{N}_{sub}$, $ \vect{u}^{(j)}$ is the coordinate vector of $\underline{u}^{(j)}_h$, while $\vect{f}^{(i)} \in \mathbb{R}^{3 n_{sub}^{(i)}}$ and $ \matb{A}^{(i,j)} \in \mathbb{R}^{3 n_{sub}^{(i)} \times 3 n_{sub}^{(j)}} $ are respectively the vector and matrix representations of the functional \eqref{eq:F} and of the bilinear form \eqref{eq:bil}, when considering $\left[ V^{(i)}_{sub} \right]^3 $ as test function space and $\left[ V^{(j)}_{sub} \right]^3$ as trial function space.
Note that the vectorial nature of the basis functions naturally induces a further block structure in $\matb{A}^{(i,j)}$ and $\vect{f}^{(i)}$:
$$ \matb{A}^{(i,j)} = \begin{bmatrix} \matb{A}^{(i,j,1,1)} & \matb{A}^{(i,j,1,2)} & \matb{A}^{(i,j,1,3)} \\ \matb{A}^{(i,j,2,1)} & \matb{A}^{(i,j,2,2)} & \matb{A}^{(i,j,2,3)} \\ \matb{A}^{(i,j,3,1)} & \matb{A}^{(i,j,3,2)} & \matb{A}^{(i,j,3,3)} \end{bmatrix}, \qquad \vect{f}^{(i)} = \begin{bmatrix} \vect{f}^{(i,1)} \\ \vect{f}^{(i,2)} \\ \vect{f}^{(i,3)} \end{bmatrix}, $$
with $\matb{A}^{(i,j,k,\ell)} \in \mathbb{R}^{n_{sub}^{(i)} \times n_{sub}^{(j)}} $ and $\vect{f}^{(i,k)} \in \mathbb{R}^{n_{sub}^{(i)}} $ for $k,\ell = 1,2,3$.

The linear system \eqref{eq:lin_sys} may be singular. Nevertheless, it
is solvable (as there exists a solution of the Galerkin problem) and
since $\matb{A}$ is symmetric and positive semidefinite one solution
could be found by a conjugate gradient
method, see  \cite{Kaasschieter1988}.

As a further step, in the spirit of \cite{Montardini2023}, we consider low-rank Tucker approximations of matrix and vector blocks, i.e.
\begin{equation} \label{eq:Ablock_Tucker} \matb{A}^{(i,j,k,\ell)} \approx \widetilde{\matb{A}}^{(i,j,k,\ell)} = \sum_{r_3=1}^{R_3^\matb{A}} \sum_{r_2=1}^{R_2^\matb{A}} \sum_{r_1=1}^{R_1^\matb{A}} \mathfrak{A}_{r_1,r_2,r_3} \widetilde{A}^{(i,j,k,\ell)}_{(3,r_3)} \otimes \widetilde{A}^{(i,j,k,\ell)}_{(2,r_2)} \otimes \widetilde{A}^{(i,j,k,\ell)}_{(1,r_1)}, \qquad 
\end{equation}
\begin{equation} \label{eq:fblock_Tucker} \vect{f}^{(i,k)} \approx \widetilde{\vect{f}}^{(i,k)} = \sum_{r_3=1}^{R_3^{\vect{f}}} \sum_{r_2=1}^{R_2^{\vect{f}}} \sum_{r_1=1}^{R_1^{\vect{f}}} \mathfrak{f}_{r_1,r_2,r_3} \widetilde{f}^{(i,k)}_{(3,r_3)} \otimes f^{(i,k)}_{(2,r_2)} \otimes f^{(i,k)}_{(1,r_1)}, 
\end{equation} 
for $i,j = 1,\ldots,\mathcal{N}_{sub}$, $k,\ell = 1,2,3$, where $R^{\matb{A}}_d = R^{\matb{A}}_d(i,j,k,\ell)$ and $R^{\vect{f}}_d = R^{\vect{f}}_d(i,k)$ for $d = 1,2,3$. In Section \ref{sec:ptc_union}
 we give details on how these approximations can be performed, when the subdomains are defined as unions of neighboring patches.
 We emphasize that, for the proposed strategy to be beneficial,
the multilinear ranks of the approximations, which depend on the geometry coefficients and on the forcing term, have to be reasonably small.

We collect all the newly defined matrix blocks $\widetilde{\matb{A}}^{(i,j,k,\ell)}$ and  vector blocks $\widetilde{\vect{f}}^{(i,k)}$ into a single matrix $\widetilde{\matb{A}}$ and a single vector $\widetilde{\vect{f}}$, respectively. Then we look for a vector $\widetilde{\vect{u}}$ with blocks in Tucker format that approximately solves the linear system
\begin{equation}
\label{eq:low_rank_sys}
 \widetilde{\matb{A}} \widetilde{\vect{u}} = \widetilde{\vect{f}}. 
\end{equation}
Following \cite{Montardini2023}, we tackle the above problem using a variant of the truncated preconditioned conjugate gradient (TPCG) method \cite{Kressner2011,Tobler2012}. This is similar to the standard preconditioned conjugate gradient, but after each step a truncation is applied to the vector blocks in order keep their ranks sufficiently small.

\subsection{The preconditioner}

As an ``ideal'' preconditioner for $\widetilde{\matb{A}}$ we consider the block diagonal matrix
$$ \matb{P} = \begin{bmatrix} \matb{P}^{(1)} & 0 & \ldots & 0 \\ 0 & \matb{P}^{(2)} & \ldots & 0 \\ \vdots & \vdots & \ddots & \vdots \\ 0 & 0 & \ldots &  \matb{P}^{(\mathcal{N}_{sub})} \end{bmatrix}, $$
where for each $i=1,\ldots,\mathcal{N}_{sub}$, ${\matb{P}}^{(i)}$ is a block diagonal approximation of $\matb{A}^{(i,i)}$, i.e.
$$ \matb{P}^{(i)} = \begin{bmatrix} \matb{P}^{(i,1)} & 0 & 0 \\ 0 & \matb{P}^{(i,2)} & 0 \\ 0 & 0 & \matb{P}^{(i,3)}  \end{bmatrix}, $$
where $\matb{P}^{(i,k)}$ is an approximation of $\matb{A}^{(i,i,k,k)}$, $k=1,2,3$. More precisely, we choose 
\begin{equation} \label{eq:prec_block} \matb{P}^{(i,k)} = c^{(i,k)}_{1} M_3^{(i)} \otimes M_2^{(i)} \otimes K_1^{(i)} + c^{(i,k)}_{2} M_3^{(i)} \otimes K_2^{(i)} \otimes M_1^{(i)} + c^{(i,k)}_{3} K_3^{(i)} \otimes M_2^{(i)} \otimes M_1^{(i)}, \end{equation}
where, for $d=1,2,3,$ $K_{d}^{(i)}$ and $M_{d}^{(i)}$ are respectively
the stiffness and mass matrix over the univariate spline space
$\widehat{\S}_{sub,d}^{(i)}$. Differently from
\cite{Montardini2023}, here we introduce some constant coefficients
$c^{(i,k)}_{\ell} \in \mathbb{R}$ that aim to take into account the
geometry and the elasticity coefficients, whose choice is discussed
below. We emphasize that a similar approach was used in
\cite{Fuentes2023}.

Note that if $c^{(i,k)}_{\ell} = 1$ for every $\ell=1,2,3$, then $\matb{P}^{(i,k)}$ represents the discretization of the Laplace operator over the spline space $\widehat{\boldsymbol{\S}}_{sub,0}^{(i)}$.


 We recall that, as discussed in \cite[Theorem 2]{BMS-arXiv}, the
use of a block diagonal preconditioner is equivalent to the use of additive overlapping
Schwarz preconditioners for the nonsingular linear system obtained by
constructing a global spline basis. Here, however, \B each diagonal block $\left( \matb{P}^{(i,k)} \right)^{-1}$ is further approximated with a low-rank Tucker matrix following the steps described in \cite[Section 4.2]{Montardini2023}.
We emphasize in particular that this strategy is not spoiled by the presence of the constant coefficients.
A key feature of this approach is that the action of $ \left( \widetilde{\matb{P}}^{(i,k)} \right)^{-1}$ on a vector can be computed using a variant of the fast diagonalization method \cite{Lynch1964,Sangalli2016} that
exploits the fast Fourier transform (FFT), yielding almost linear complexity. In conclusion, the preconditioner $\widetilde{\matb{P}}$ is the block diagonal matrix obtained by collecting all matrices $\widetilde{\matb{P}}^{(i,k)}$, for $i=1,\ldots,\mathcal{N}_{sub}$, $k=1,2,3$.

We now discuss the choice of the constant coefficients $c^{(i,k)}_{\ell}$ appearing in \eqref{eq:prec_block}. We fix $i \in \left\lbrace 1,\ldots,\mathcal{N}_{sub} \right\rbrace $ and $k \in \left\lbrace 1,2,3 \right\rbrace $. Let $v_h \in V_{sub}^{(i)}$ and let $\vect{v}$ denote its representing vector. It holds
$$ \vect{v}^T \matb{A}^{(i,i,k,k)} \vect{v} =\int_{\Theta^{(i)}}  2 \mu  \varepsilon(\underline{e}_{k} v_h) : \varepsilon(\underline{e}_{k} v_h) + \lambda  \left( \nabla \cdot \underline{e}_k v_h \right) \left(\nabla \cdot \underline{e}_{k} v_h \right) \d{\underline{x}} = \int_{[0,1]^3} \left( \nabla \widehat{v}_{sub}^{(i)} \right)^T Q^{(i,k)} \nabla \widehat{v}_{sub}^{(i)} \d{\underline{\xi}}, $$
where $\widehat{v}_{sub}^{(i)} = v_h \circ \mathcal{G}_i \in \widehat{\boldsymbol{\S}}^{(i)}_{sub,0}$, and
$$ Q^{(i,k)} = \vert \det \left( J_{\mathcal{G}_i} \right) \vert J_{\mathcal{G}_i}^{-1} \left[  \mu \left( I_3 + \underline{e}_{k} \underline{e}_k^T \right) + \lambda  
\underline{e}_{k} \underline{e}_{k}^T
\right] J_{\mathcal{G}_i}^{-T}.$$
Here $I_3$ denotes the $3 \times 3$ identity matrix. Moreover,
$$ \vect{v}^T \matb{P}^{(i,k)} \vect{v} = \int_{[0,1]^3} \left( \nabla \widehat{v}_{sub}^{(i)} \right)^T \begin{bmatrix} c^{(i,k)}_{1} & 0 & 0 \\ 0 & c^{(i,k)}_{2} & 0 \\ 0 & 0 & c^{(i,k)}_{3} \end{bmatrix} \nabla \widehat{v}_{sub}^{(i)} \d{\underline{\xi}}. $$
It is apparent that the $3 \times 3$ constant matrix appearing in the above formula should be chosen as the best constant diagonal approximation of $Q^{(i,k)}$. It is therefore reasonable to require that 
$$ c^{(i,k)}_{\ell} \approx Q^{(i,k)}_{\ell,\ell} (\xi_1,\xi_2,\xi_3), \qquad \ell=1,2,3, \quad \xi_1,\xi_2,\xi_3 \in [0,1]. $$
{ For $\ell = 1,2,3$, we  choose $c^{(i,k)}_{\ell}$ as the average of $Q^{(i,k)}_{\ell,\ell}$ evaluated on the breakpoints of the $l$-th   knot vector of the geometry that describes $\Theta^{(i)}$ and their midpoints. }

\subsection{The TPCG method with block-wise truncation}
\label{sec:tpcg}

As already mentioned, a crucial feature of the TPCG method is the truncation step. Indeed, the operations performed during the iterative process, namely the vector sums and more importantly the matrix-vector products, increase the multilinear ranks of the iterates. Therefore, it is of paramount importance to keep their rank reasonably small through truncation.
We emphasize that in our version of the TPCG method the involved vectors (and matrices) do not have a global Tucker structure. Instead, they can be subdivided into blocks with Tucker structure, each with its own multilinear rank. Therefore, all the operations that exploit the Tucker structure, including truncation, are performed to each matrix/vector block separately.

The simplest truncation operator considered here is relative truncation and it is denoted with $\trunc{{T}^{rel}}$. Given a Tucker vector $\vect{v}$ and a relative tolerance $\epsilon > 0$, $\vect{\widetilde{v}} = \trunc{{T}^{rel}}(\vect{v},\eta)$ is a Tucker vector with smaller or equal multilinear rank (in each direction) such that
\begin{equation} \label{eq:rel_trunc} \Vert \vect{\widetilde{v}} - \vect{v} \Vert_2 \leq \epsilon \Vert \vect{v} \Vert_2 .\end{equation}
We refer to \cite[Section 4.1.1]{Montardini2023} for more details. 

We also consider a dynamic truncation operator $\trunc{{T}^{dt}}$, which is used only for the approximate solution $\vect{u}_{k+1}$. Here a truncated approximation $\vect{\widetilde{u}}_{k+1}$ of $\vect{u}_{k+1}$ is initially computed using block-wise relative truncation with a prescribed initial tolerance $\epsilon$. Then the algorithm assesses how much the exact solution update $\Delta \vect{u}_k = \vect{u}_{k+1} - \vect{u}_{k}$ differs from the truncated one $\Delta \vect{\widetilde{u}}_k = \vect{\widetilde{u}}_{k+1} - \vect{u}_{k}$ by checking if the condition
$$ \left\vert \frac{\Delta \vect{\widetilde{u}}_k \cdot \Delta \vect{u}_k  }{\Vert \Delta \vect{u}_k  \Vert^2_2} - 1 \right\vert \leq \delta $$
is satisfied, where $\delta > 0$ is a prescribed tolerance. If the condition is satisfied, then $\vect{\widetilde{u}}_{k+1}$ is accepted as the next iterate. Otherwise, a new tolerance for the relative truncation is selected, equal to $\max \left\lbrace  \alpha \epsilon, \epsilon_{\min} \right\rbrace $, where $0 < \alpha < 1$ and $\epsilon_{\min} >0$ is a fixed minimum tolerance. The process is repeated until an acceptable truncated solution is found (or until the tolerance reaches $\epsilon_{\min}$). The final relative tolerance is returned and it is used as initial tolerance for $\trunc{{T}^{dt}}$ at the next TPCG iteration.  A starting relative tolerance $\epsilon_0$ is fixed at the beginning of the TPCG algorithm. We refer to  \cite[Section 4.1.2]{Montardini2023} for more details.

Except for $\vect{\widetilde{u}}_{k+1}$, all other vectors are compressed using block-wise relative truncation with tolerance 
$$ \eta_k = \beta \; tol \frac{ \Vert \vect{r}_0 \Vert_2}{\Vert \vect{r}_k \Vert_2},$$
where $\vect{r}_0$ and $\vect{r}_k$ are the initial and $k-$th residual vector, respectively, $0 < \beta < 1$ and \textit{tol} is the TPCG tolerance.
This choice of the tolerance promotes low rank, see also \cite{Simoncini2003,Palitta2021}.
Furthermore, we introduce intermediate truncation steps during the computation of matrix-vector products with $\matb{A}$, 
motivated by the significant rank increment yielded by this operation. 
We remark, however, that intermediate truncation can lead to cancellation errors and stagnation, see e.g. the discussion in \cite[Section 3.6.3]{Tobler2012}. Therefore, a strict relative tolerance $\gamma$ should be chosen to safely perform this step.


The TPCG method is reported in Algorithm \ref{al:cg_tensor}, while Algorithm \ref{al:matvec} and Algorithm \ref{al:prec} perform respectively the matrix-vector products with the system matrix (and possibly the computation of the residual) and the application of the preconditioner. Note that, in the latter algorithms, each vector block is truncated immediately after being computed (for Algorithm \ref{al:matvec}, this is done in addition to the intermediate truncation steps discussed above).


\begin{rmk}{
If the magnitude of the solution varies  significantly in different
subdomains, it is a good idea to perform truncations on the vector
blocks such that the absolute error (instead of the relative error) is
equidistributed. This help in balancing the error among different subdomains. 
Therefore, given  a vector block $\vect{y}^{(i,k)} $, belonging to a
vector $\vect{y}$,  Its truncation $\widetilde{\vect{y}}^{(i,k)}$ could be chosen by imposing a condition of the form
$$\Vert \vect{\widetilde{y}}^{(i,k)} - \vect{y}^{(i,k)} \Vert_2 \leq \frac{\eta}{\mathcal{N}_{sub}} \Vert \vect{y} \Vert_2, $$
for a given $\eta > 0$ independent of $i \in \left\lbrace 1, \ldots, \mathcal{N}_{sub} \right\rbrace $, and $k \in \left\lbrace 1,2,3 \right\rbrace $. Note that in the right-hand side of the above inequality we consider the norm of $\vect{y}$, rather than that of its block $\vect{y}^{(i,k)}$. }

In the present paper we do not use this strategy since for all problems considered in Section \ref{sec:numerics}, the solution is not significantly larger on a portion of the domain with respect to the rest. 
\end{rmk}

\begin{algorithm}
\caption{TPCG}\label{al:cg_tensor}
\hspace*{\algorithmicindent} \textbf{Input}: { System matrix   $\matb{\widetilde{A}}$  and block diagonal preconditioner $\widetilde{\matb{P}}$ in block-wise Tucker format, right-hand side $\vect{\widetilde{f}}$ and initial guess $\vect{u}_0$ in block-wise Tucker format, TPCG tolerance $tol>0$, parameter $\beta$ for the relative truncation, intermediate relative truncation tolerance $\gamma$, parameters for the dynamic truncation: starting relative tolerance $\epsilon_0$, reducing factor $\alpha$, minimum tolerance $\epsilon_{\min}$, threshold $\delta$.} \\
 \hspace*{\algorithmicindent} \textbf{Output}: Low-rank solution $\widetilde{\vect{u}}$  of $\matb{\widetilde{A}} \widetilde{\vect{u}} =\vect{\widetilde{f}}.$
 \begin{algorithmic}[1]
 \State $\eta_0 = \beta \; tol$;
 \State $\vect{r}_0= -{\tt Matvec} ( \matb{\widetilde{A}}, \vect{u}_0, \vect{\widetilde{f}}, \eta_0 ) $
  \State $\vect{z}_0=  {\tt Prec} ( \matb{\widetilde{P}}, \vect{r}_0,\eta_0 )$;
  \State $\vect{p}_0=\vect{z}_0$;
  \State $k=0$
 \While{$\|\vect{r}_{k}\|_2>tol$}{\\
         \qquad  $\vect{q}_{k}= {\tt Matvec}(\matb{\widetilde{A}}, \vect{p}_{k},\vect{0},\gamma,\eta_{k})$; \\
         \qquad ${\xi}_{k}=\vect{p}_{k}\cdot \vect{q}_{k}$\\
         \qquad  $\omega_k = \frac{\vect{r}_k \cdot \vect{p}_k }{{\xi}_k}$;\\ 
         \qquad  $[\vect{u}_{k+1},\epsilon_{k+1}]={    \trunc{{T}^{dt}}}(\vect{u}_k, \vect{u}_k+\omega_k\vect{p}_k,\epsilon_k, \alpha, \epsilon_{\min},\delta)$; \\  
       \qquad $ \vect{r}_{k+1} =  -{\tt Matvec} (\matb{\widetilde{A}},\vect{u}_{k+1}, \vect{\widetilde{f}},\gamma,\eta_k)$ \\
         \qquad $\eta_{k+1}=\beta \; tol\frac{\|\vect{r}_0\|_2}{\|\vect{r}_{k+1}\|_2}$;\\
         \qquad  $\vect{z}_{k+1} = {\tt Prec} (\vect{r}_{k+1},\eta_{k+1})$; \\
         \qquad  $\beta_k=-\frac{\vect{z}_{k+1}\cdot\vect{q}_k} {\vect{ \xi}_{k}}$;\\
        \qquad    $\vect{p}_{k+1}=  \trunc{{T}^{rel}}(\vect{z_{k+1}}+\beta_k\vect{p}_k, \eta_{k+1})$;  \\
        \qquad   $k=k+1$;
  }\EndWhile
\State $\widetilde{\vect{u}}=\vect{u}_{k}.$
\end{algorithmic}
\end{algorithm}

\begin{algorithm}
\caption{{\tt Matvec}}\label{al:matvec}
  \hspace*{\algorithmicindent} \textbf{Input}: Matrix $\matb{\widetilde{A}}$ in block-wise Tucker format, vectors $\vect{x}$ and $\vect{b}$ in block-wise Tucker format, intermediate and final relative truncation tolerances $\gamma$ and $\eta$  \\
 \hspace*{\algorithmicindent} \textbf{Output}: vector $ \widetilde{\vect{y}}$, block-wise truncation of $\vect{y} = \matb{\widetilde{A}} \vect{x}- \vect{b}$
 \begin{algorithmic}[1]
 \For{ $i = 1,\ldots,\mathcal{N}_{sub}$ }{
        \qquad \For{ $k = 1,2,3$ }{\\
\qquad \quad $\widetilde{\vect{y}}^{(i,k)} = - \vect{b}^{(i,k)} $ \For{ $j = 1, \ldots, \mathcal{N}_{sub}$}{ \For{ $\ell=1,2,3$ }{ \\ \qquad \qquad \qquad $\widetilde{\vect{y}}^{(i,k)} = \trunc{{T}^{rel}} \left( \widetilde{\vect{y}}^{(i,k)} + \matb{\widetilde{A}}^{(i,j,k,\ell)} \vect{x}^{(j,\ell)}, \gamma \right) $} \EndFor } \EndFor
\\ \qquad \quad $\widetilde{\vect{y}}^{(i,k)} = \trunc{{T}^{rel}} \left( \widetilde{\vect{y}}^{(i,k)}, \eta \right)$} \EndFor } \EndFor
\end{algorithmic}
\end{algorithm}

\begin{algorithm}
\caption{{\tt Prec}}\label{al:prec}
  \hspace*{\algorithmicindent} \textbf{Input}: Block diagonal preconditioner $\matb{\widetilde{P}}$ with blocks in Tucker format, vector $\vect{r}$ in block-wise Tucker format, relative truncation tolerance $\eta$ \\
 \hspace*{\algorithmicindent} \textbf{Output}: vector $ \widetilde{\vect{z}}$, block-wise truncation of $\vect{z} = \matb{\widetilde{P}}^{-1} \vect{r}$
 \begin{algorithmic}[1]
 \For{ $i = 1,\ldots,\mathcal{N}_{sub}$ }{
        \For{ $k = 1,2,3$ }{\\
\qquad \quad $\widetilde{\vect{z}}^{(i,k)} = \trunc{{T}^{rel}} \left( \left(\matb{\widetilde{P}}^{(i,k)}\right)^{-1} \vect{r}^{(i,k)} , \eta \right) $
}\EndFor  }\EndFor
\end{algorithmic}
\end{algorithm}

\subsection{Subdomains as unions of patches}
\label{sec:ptc_union}



In the setting described in Section \ref{sec:iso_spaces}, we can choose the subdomains as unions of patches.
More precisely, the subdomains can be defined as follows. For every corner that is shared by 8 patches, we introduce a subdomain defined as the union of those 8 patches. Then, for every edge that it is shared by 4 patches and that is not contained in the interior of the previous subdomains, we introduce a new subdomain defined as the union of those 4 patches. Finally, for every face that is shared by 2 patches and that is not contained in the interior of the previous subdomains, we introduce a new subdomain defined as the union of those 2 patches. Note that this procedure guarantees that \eqref{eq:subdomains} is satisfied.

In the reminder of this section we
give details on the function subspaces and on the construction of the Galerkin matrices for the case 
of subdomains defined as the union of pairs of patches. The treatment of other subdomains represents a straightforward extension of this case. Indeed, a 4-patches subdomain can be treated as the union of a pair of 2-patches subdomains. Similarly, an 8-patches subdomain can be treated as the union of a pair of 4-patches subdomains.


We fix $i\in \left\lbrace 1,\ldots,\mathcal{N}_{sub} \right\rbrace $ and let $i_1,i_2 \in \left\lbrace 1,\ldots,\mathcal{N}_{ptc}\right\rbrace $ such that $\Theta^{(i)} = \Omega^{(i_1)} \cup \Omega^{(i_2)}$.
%
We assume that if a face of $\Omega^{(i_1)}$ shares an edge with a face of $\Omega^{(i_2)}$, then either both faces belong to $\Gamma \cup \partial\Omega_D$, or they both belong to $\partial\Omega_N $. This assumption guarantees that the space obtained by ``merging'' $V_{ptc}^{(i_1)} $ and $V_{ptc}^{(i_2)}$ can be written as (the pushforward of) a tensor product spline space. Below we give the details. 




It is not restrictive to assume that
$\Omega^{(i_1)}$ and $\Omega^{(i_2)}$ are attached along the third parametric direction, and in particular that
$$ \mathcal{F}_{(i_1)}(\xi_1,\xi_2,1) = \mathcal{F}_{(i_2)}(\xi_1,\xi_2,0), \qquad (\xi_1,\xi_2) \in [0,1]^2. $$
Because of the patch conformity assumption, the parametric spline spaces $\widehat{\boldsymbol{\S}}_{ptc}^{(i_1)}$ and $\widehat{\boldsymbol{\S}}_{ptc}^{(i_2)}$ must have the same knot vectors in the first 2 directions, i.e. $\Xi_1^{(i_1)} = \Xi_1^{(i_2)} =: \Xi_1$ and $\Xi_2^{(i_1)} = \Xi_2^{(i_2)} =: \Xi_2$. Moreover, we consider the knot vectors in the third directions
$$ \Xi_3^{(i_1)} = \left\lbrace \underbrace{0,\ldots,0}_{p+1}, \xi^{(i_1)}_{p+2},\ldots,\xi^{(i_1)}_{m_3^{(i_1)}},\underbrace{1,\ldots,1}_{p+1} \right\rbrace, \qquad \Xi_3^{(i_2)} = \left\lbrace \underbrace{0,\ldots,0}_{p+1}, \xi^{(i_2)}_{p+2},\ldots,\xi^{(i_2)}_{m_3^{(i_2)}},\underbrace{1,\ldots,1}_{p+1} \right\rbrace.  $$

We now define the ``merged'' spline space $\widehat{\boldsymbol{\S}}_{sub}^{(i)}$, generated by the knot vectors $\Xi_1$, $\Xi_2$ and
$$ \Xi_3 = \left\lbrace \underbrace{0,\ldots,0}_{p+1}, \frac{1}{2}\xi^{(i_1)}_{p+2},\ldots,\frac{1}{2} \xi^{(i_1)}_{m_3^{(i_1)}}, \underbrace{\frac{1}{2},\ldots,\frac{1}{2}}_p, \frac{1}{2}\xi^{(i_2)}_{p+2}+ \frac{1}{2},\ldots,\frac{1}{2} \xi^{(i_2)}_{m_3^{(i_2)}}+ \frac{1}{2}, \underbrace{1,\ldots,1}_{p+1} \right\rbrace.$$
We also consider the geometry mapping $\mathcal{G}_{i}: [0,1]^3 \longrightarrow \Theta^{(i)} $
\begin{equation} \label{eq:mapG} \mathcal{G}_{i}\left( \xi_1,\xi_2,\xi_3 \right)  = \left\lbrace \begin{array}{ll} \mathcal{F}_{i_1}\left( \xi_1,\xi_2,2 \xi_3 \right)  & \text{if } 0 \leq \xi_3 \leq \frac{1}{2}, \\ \mathcal{F}_{i_2}\left( \xi_1,\xi_2,2\xi_3-1 \right) & \text{if } \frac{1}{2} < \xi_3 \leq 1. \end{array} \right.  \end{equation}
We introduce the subspace $\widehat{\boldsymbol{\S}}_{sub,0}^{(j)} \subseteq \widehat{\boldsymbol{\S}}_{sub}^{(j)} $ generated by the basis functions of $\widehat{\boldsymbol{\S}}_{sub}^{(j)}$ whose image through $\mathcal{G}_{j}$ vanishes on { $(\Gamma\setminus (\partial\Omega^{(i_1)}\cap \partial\Omega^{(i_2)})) \cup \partial \Omega_D$}. We emphasize that, under the aforementioned assumptions on the boundary conditions, $\widehat{\boldsymbol{\S}}_{sub,0}^{(i)}$ is a tensor product space as in \eqref{eq:S_sub}. 
It holds
$$V_{sub}^{(i)} = \left\lbrace \widehat{v}_h \circ \mathcal{G}_{i}^{-1} \middle| \; \widehat{v}_h \in \widehat{\boldsymbol{\S}}_{sub,0}^{(i)} \right\rbrace. $$


\begin{rmk}\label{rmk:boundary_cond}
In the case of arbitrary boundary conditions, merging two local spline spaces does not necessarily results in a space which is the pushforward of a tensor product spline space {\R1 (see some examples for the L-shaped domain in Figure \ref{fig:not_allowed}).} 
  In this case, one can define $V_{sub}^{(i)}$ by substituting any Neumann boundary conditions with homogeneous Dirichlet. Then an additional subdomain should be introduced containing the Neumann degrees of freedom.  
  
  \begin{center}
  \begin{figure}
  \includegraphics[scale=0.27]{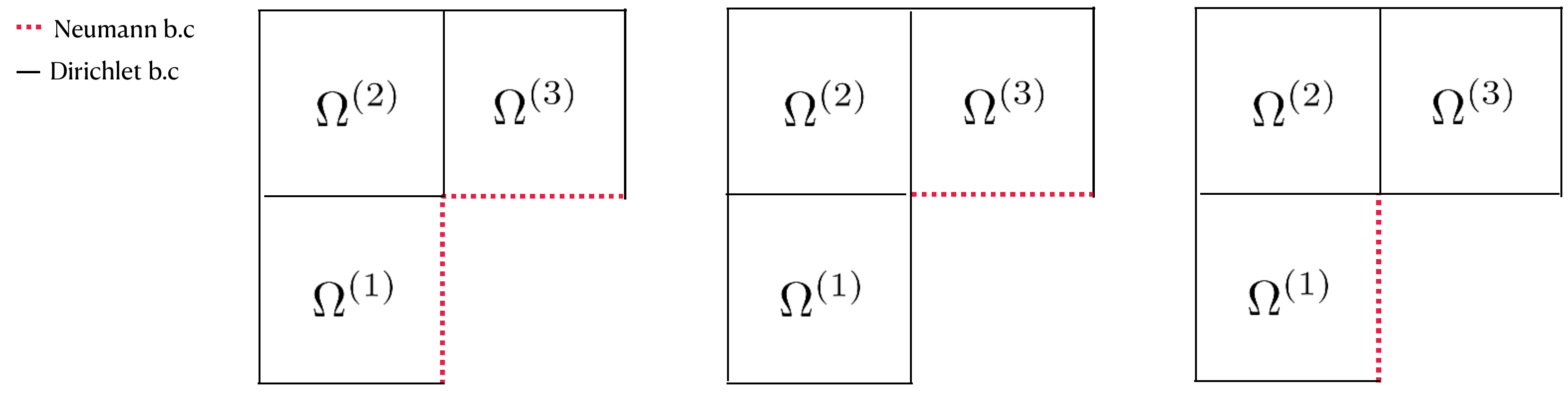}
\caption{Examples of boundary conditions that are not allowed for the L-shaped domain. A remedy is proposed in Remark \ref{rmk:boundary_cond}}\label{fig:not_allowed}
\end{figure}
  \end{center}
  
\end{rmk}

We now discuss in detail how the approximations  \eqref{eq:Ablock_Tucker} and \eqref{eq:fblock_Tucker} can be performed using low-rank techniques. We first consider the case of a matrix block $\matb{A}^{(i,j)}$ for $i \neq j$, $i,j \in \left\lbrace 1,\ldots,\mathcal{N}_{sub}\right\rbrace $. Of course if $\Theta^{(i)} \cap \Theta^{(j)} $ has an empty interior, then $\matb{A}^{(i,j)}$ is a null matrix. On the other hand, if $\Theta^{(i)} \cap \Theta^{(j)} $ has a non-empty interior, then there is an index $m \in \left\lbrace 1, \ldots, \mathcal{N}_{ptc} \right\rbrace $ such that $\Theta^{(i)} \cap \Theta^{(j)} = \Omega^{(m)}$. Let $v_h \in V^{(i)}_{sub}$ and $w_h \in V^{(j)}_{sub}$, and let $\vect{v}$ and $\vect{w}$ be their vector representations. 
Then for $k,\ell = 1,2,3$, it holds
\begin{align}  \begin{split} \label{eq:Ablock}
\vect{v}^T \matb{A}^{(i,j,k,\ell)} \vect{w} & =\int_{\Omega^{(m)}}  2 \mu \;  \varepsilon(\underline{e}_{k} v_h) : \varepsilon(\underline{e}_{\ell} w_h) + \lambda  \left( \nabla \cdot \underline{e}_k v_h \right) \left(\nabla \cdot \underline{e}_{\ell} w_h \right) \d{\underline{x}} = \\ & = \int_{[0,1]^3} \left( \nabla \widehat{v}_{ptc}^{(m)} \right)^T C^{(m,k,\ell)} \nabla \widehat{w}_{ptc}^{(m)} \d{\underline{\xi}}, \end{split} \end{align}
where $\widehat{v}_{ptc}^{(m)} = v_h  \circ \mathcal{F}_m, \widehat{w}_{ptc}^{(m)} = w_h \circ \mathcal{F}_m$, and
\begin{equation} \label{eq:Cmkl}
C^{(m,k,\ell)} = \vert \det \left( J_{\mathcal{F}_m} \right) \vert J_{\mathcal{F}_m}^{-1} \left[  \mu \left( \delta_{k,\ell} I_3 + \underline{e}_{\ell} \underline{e}_k^T \right) + \lambda  
\underline{e}_{k} \underline{e}_{\ell}^T
\right] J_{\mathcal{F}_m}^{-T}.\end{equation}
Here $\delta_{k,\ell}$ is the Kronecker delta. Note that $\widehat{v}_{ptc}^{(m)}, \widehat{w}_{ptc}^{(m)} \in \widehat{\boldsymbol{\S}}_{ptc,0}^{(m)}$. Note that 
if $\text{supp}\left( v_h \right) \cap \text{supp}\left( w_h \right) \cap \Omega^{(m)}$ has an empty interior, at least one between $\widehat{v}_{ptc}^{(m)}$ and $\widehat{w}_{ptc}^{(m)}$ is the null function, and therefore $\vect{v}^T \matb{A}^{(i,j,k,\ell)} \vect{w} = 0$.

We consider a low-rank approximation of the entries of $C^{(m,k,\ell)}$, i.e. 
\begin{equation} \label{eq:low_rank_C} C^{(m,k,\ell)}_{\alpha,\beta} \left( \xi_1,\xi_2,\xi_3 \right) \approx \widetilde{C}^{(m,k,\ell)}_{\alpha,\beta} \left( \xi_1,\xi_2,\xi_3 \right) = \sum_{r_1,r_2,r_3} \mathfrak{c}_{r_1,r_2,r_3}^{(m,k,\ell,\alpha,\beta)}   c^{(m,k,\ell,\alpha,\beta)}_{(1,r_1)} \left( \xi_1 \right)   c^{(m,k,\ell,\alpha,\beta)}_{(2,r_2)} \left( \xi_2 \right)    c^{(m,k,\ell,\alpha,\beta)}_{(3,r_3)} \left( \xi_3 \right),
\end{equation}
for $\alpha,\beta = 1,2,3,$ and we define $\widetilde{\matb{A}}^{(i,j,k,\ell)}$ by replacing $C^{(m,k,\ell)}$ with $\widetilde{C}^{(m,k,\ell)}$ into \eqref{eq:Ablock}, i.e.
%
\begin{equation} \label{eq:Atilde_block1} \vect{v}^T \widetilde{\matb{A}}^{(i,j,k,\ell)} \vect{w} = \int_{[0,1]^3} \left( \nabla \widehat{v}_{ptc}^{(m)} \right)^T \widetilde{C}^{(m,k,\ell)} \; \nabla \widehat{w}_{ptc}^{(m)} \; \d{\underline{\xi}}.  \end{equation}

Thanks to \eqref{eq:low_rank_C} and to the tensor structure of the spline space $\widehat{\boldsymbol{\S}}_{ptc,0}^{(m)}$, we have
\begin{equation} \label{eq:low_rank_Ablock}  \widetilde{\matb{A}}^{(i,j,k,\ell)} = \sum_{r_1,r_2,r_3} \sum_{\alpha,\beta=1}^3 \mathfrak{c}_{r_1,r_2,r_3}^{(m,k,\ell,\alpha,\beta)} \widetilde{A}^{(i,j,k,\ell,\alpha,\beta)}_{(3,r_3)} \otimes  \widetilde{A}^{(i,j,k,\ell,\alpha,\beta)}_{(2,r_2)} \otimes  \widetilde{A}^{(i,j,k,\ell,\alpha,\beta)}_{(1,r_1)}, \end{equation}
where each Kronecker factor $\widetilde{A}^{(i,j,k,\ell,\alpha,\beta)}_{(d,r_d)}$ is an $n^{(i)}_{sub,d} \times n^{(j)}_{sub,d}$ matrix with null entries outside of a square $n_{ptc,d}^{(m)} \times n_{ptc,d}^{(m)}$ block $\widetilde{A}^{(m,k,\ell,\alpha,\beta)}_{(d,r_d)}$  whose entries are
$$ \left( \widetilde{A}^{(m,k,\ell,\alpha,\beta)}_{(d,r_d)}\right)_{s,t}  = \int_0^1 c^{(m,k,\ell,\alpha,\beta)}_{(d,r_d)} (\xi_d) \left[   \widehat{b}_{d,s} ^{(m)} \left( \xi_d \right)\right]^{(d,\alpha)}  \left[  \widehat{b}_{d,t}^{(m)}\left( \xi_d \right)\right]^{(d,\beta)}   d \xi_d, \qquad s,t = 1,\ldots,n_{ptc,d}^{(m)}, $$
with
$$ \left[   f\left(  \xi \right)  \right]^{(d,\gamma)}  = \left\{ \begin{array}{cl}
\displaystyle f'\left( \xi \right)  & \text{if } d = \gamma   \\
\displaystyle f\left( \xi \right) & \text{otherwise} \end{array}
\right., \qquad d,\gamma=1,2,3,$$
Note that \eqref{eq:low_rank_Ablock} can be easily written in the Tucker format \eqref{eq:Ablock_Tucker}.


There is of course more than one approach to compute the low-rank approximation \eqref{eq:low_rank_C}. A suitable possibility, proposed in \cite{Mantzaflaris2017}, is to interpolate the left-hand side function using a coarse spline space and then performing a High Order Singular Value Decomposition (HOSVD) \cite{De2000}. In this work, however, we follow \cite{Montardini2023} and employ the strategy described in \cite{Driscoll2014,Dolgov2021}, which is based on interpolation with Chebyshev polynomials. 
Note that approximation \eqref{eq:low_rank_C} has to be computed only for $\alpha \geq \beta$ and $k \geq \ell$, because of the symmetries in the coefficients.

We now discuss the low-rank approximation of a given vector block $\vect{f}^{(i,k)}$, where $i \in \left\lbrace 1,\ldots,\mathcal{N}_{ptc} \right\rbrace $, and $k \in \left\lbrace 1,2,3 \right\rbrace $. 
Let $v_h \in V^{(i)}_{sub}$ and let $\vect{v}$ be its vector representation. It holds
\begin{equation} \label{eq:f_block} \vect{v}^T \vect{f}^{(i,k)}  = \int_{\Theta^{(i)}} f_k (\underline{x}) \; v_h (\underline{x}) \; d \underline{x} = \int_{[0,1]^3} \widehat{f}_{k}^{(i)} (\underline{\xi}) \; \widehat{v}_{sub}^{(i)} (\underline{\xi}) \; d \underline{\xi}, \end{equation}
where $\widehat{f}_{k}^{(i)} = \det \left( J_{\mathcal{G}_i}\right) \left( f_k \circ \mathcal{G}_{i} \right)$ and $ \widehat{v}_{sub}^{(i)} = v_h \circ \mathcal{G}_{i} \in \widehat{\boldsymbol{\S}}_{sub,0}^{(i)} $.
We consider a low-rank approximation
\begin{equation} \label{eq:low_rank_f}  \widehat{f}_{k}^{(i)} \left( \xi_1,\xi_2,\xi_3 \right) \approx \widetilde{f}_{k}^{(i)}\left( \xi_1,\xi_2,\xi_3 \right)    =  \sum_{r_1,r_2,r_3}  \mathfrak{f}^{(i,k)}_{r_1,r_2,r_3}   f^{(i,k)}_{(1,r_1)} \left( \xi_1 \right)   f^{(i,k)}_{(2,r_2)} \left( \xi_2 \right)   f^{(i,k)}_{(3,r_3)} \left( \xi_3 \right),
 \end{equation}
 
We define $\widetilde{\vect{f}}^{(i,k)}$ by replacing $\widehat{f}_{k}^{(i)}$ with $\widetilde{f}_{k}^{(i)}$ into \eqref{eq:f_block}, i.e.
$$ \vect{v}^T \widetilde{\vect{f}}^{(i,k)} = \int_{[0,1]^3} \widetilde{f}_{k}^{(i)} \; \widehat{v}_{sub}^{(i)}(\underline{\xi}) \; \d \underline{\xi} = \sum_{r_1,r_2,r_3} \mathfrak{f}^{(i,k)}_{r_1,r_2,r_3} \int_{[0,1]^3} f^{(i,k)}_{(1,r_1)} \left( \xi_1 \right) \; f^{(i,k)}_{(2,r_2)} \left( \xi_2 \right) \; f^{(i,k)}_{(3,r_3)} \left( \xi_3 \right) \; \widehat{v}_{sub}^{(i)}(\underline{\xi}) \; \d \underline{\xi}.$$

Therefore, thanks to the tensor structure of the spline space $\widehat{\boldsymbol{\S}}_{sub,0}^{(i)}$ the vector block $\vect{f}^{(i,k)}$ can be approximated as in \eqref{eq:fblock_Tucker}. 


We now discuss the technical details of the low-rank approximation \eqref{eq:low_rank_f} 
considered in this work. 
 
Assuming that $\mathcal{G}_i$ is given by \eqref{eq:mapG}, we observe that
$$ \widehat{f}_k^{(i)} = \left\lbrace \begin{array}{ll} 2 g^{(i_i)}_k \left( \xi_1,\xi_2,2 \xi_3 \right)  & \text{if } 0 \leq \xi_3 \leq \frac{1}{2}, \\ 2 g^{(i_2)}_k \left( \xi_1,\xi_2,2\xi_3-1 \right) & \text{if } \frac{1}{2} < \xi_3 \leq 1. \end{array} \right. $$
where $g^{(m)}_k = \det \left( J_{\mathcal{F}_m} \right) \left( f_k \circ \mathcal{F}_{m} \right) $ for $m=i_1,i_2$. Note in particular that $\widehat{f}_k^{(i)}$ could be discontinuous at $\xi_3 = \frac{1}{2}$.
%
%
We use the {\tt Chebfun3F} algorithm \cite{Dolgov2021} to independently compute low-rank approximations of $g^{(i_1)}_k $ and $g^{(i_2)}_k $, 
i.e.
\begin{equation} \label{eq:g_tilde}
\widetilde{g}^{(m)}_k \approx  \det \left( J_{\mathcal{F}_m} \right) \left( f_k \circ \mathcal{F}_{m} \right)
\end{equation}
for $m=i_1,i_2$. We then consider
\begin{equation} \label{eq:ftilde_glue} \widetilde{f}_k^{(i)} = \left\lbrace \begin{array}{ll} 2 \widetilde{g}^{(i_i)}_k \left( \xi_1,\xi_2,2 \xi_3 \right)  & \text{if } 0 \leq \xi_3 \leq \frac{1}{2}, \\ 2 \widetilde{g}^{(i_2)}_k \left( \xi_1,\xi_2,2\xi_3-1 \right) & \text{if } \frac{1}{2} < \xi_3 \leq 1. \end{array} \right. \end{equation}
Note that $\widetilde{f}_k^{(i)}$ can be easily written as a low-rank function whose multilinear rank is the sum of the multilinear ranks of $g^{(i_1)}_k $ and $g^{(i_2)}_k $. Therefore, as final step we reduce the rank of $\widetilde{f}_k^{(i)}$ using a purely algebraic approach based on the HOSVD.
This truncation is performed with a relative tolerance close to machine precision, in order to remove any redundant terms while at the same time guaranteeing that\eqref{eq:ftilde_glue} still holds. In particular, as it will be useful in the following, we observe that \eqref{eq:ftilde_glue} yields
\begin{equation} \label{eq:ftilde_block}
\vect{v}^T \widetilde{f}^{(i,k)} = \int_{[0,1]^3} \widetilde{f}_{k}^{(i)} (\underline{\xi}) \; \widehat{v}_{sub}^{(i)} (\underline{\xi}) \; \d \underline{\xi} =  \int_{[0,1]^3} \widetilde{g}_k^{(i_1)}(\underline{\xi}) \; \widehat{v}_{ptc}^{(i_1)} (\underline{\xi}) \; + \; \widetilde{g}_k^{(i_2)}(\underline{\xi}) \; \widehat{v}_{ptc}^{(i_2)} (\underline{\xi}) \; \d \underline{\xi},
\end{equation}
where $\widehat{v}_{ptc}^{(m)} = v_h \circ \mathcal{F}_m $, for $m=i_1,i_2$.



We finally briefly discuss the Tucker approximation of a given matrix block $ \matb{A}^{(i,i,k,\ell)} $, where $i \in \left\lbrace 1,\ldots,\mathcal{N}_{sub}\right\rbrace $, $k,\ell \in \left\lbrace 1,2,3\right\rbrace $. Let $v_h,w_h \in V^{(i)}_{sub}$, and let $\vect{v}$ and $\vect{w}$ be their vector representations. It holds
\begin{align} \begin{split} \label{eq:Ablock_diag} \vect{v}^T \matb{A}^{(i,i,k,\ell)} \vect{w} & = \int_{\Theta^{(i)}}  2 \mu  \; \varepsilon(\underline{e}_{k} v_h) : \varepsilon(\underline{e}_{\ell} w_h) + \lambda  \left( \nabla \cdot \underline{e}_k v_h \right) \left(\nabla \cdot \underline{e}_{\ell} w_h \right) \d{\underline{x}} = \\ & = \int_{[0,1]^3} \left( \nabla \widehat{v}_{sub}^{(i)} \right)^T Q^{(i,k,\ell)} \nabla \widehat{w}_{sub}^{(i)} \d{\underline{\xi}} ,\end{split} \end{align}
where $\widehat{v}_{sub}^{(i)} = v_h \circ \mathcal{G}_i$, $\widehat{w}_{sub}^{(i)} = v_h \circ \mathcal{G}_i \in \widehat{\boldsymbol{\S}}^{(i)}_{sub,0}$, and
$$ Q^{(i,k,\ell)} = \vert \det \left( J_{\mathcal{G}_i} \right) \vert J_{\mathcal{G}_i}^{-1} \left[  \mu \left( \delta_{k,\ell} I_3 + \underline{e}_{\ell} \underline{e}_k^T \right) + \lambda  
\underline{e}_{k} \underline{e}_{\ell}^T
\right] J_{\mathcal{G}_i}^{-T}.$$
 
The low-rank approximations of the entries of $Q^{(i,k,\ell)}$ are computed 
similarly as for \eqref{eq:low_rank_f}.
We assume as before that $\Theta^{(i)} = \Omega^{(i_1)} \cup \Omega^{(i_2)}$. For each $\alpha,\beta = 1,2,3,$ we use again the approximation \eqref{eq:low_rank_Ablock} of the coefficients $C_{\alpha,\beta}^{i_1,k,\ell}$ and $C_{\alpha,\beta}^{i_2,k,\ell}$.  The two approximations are combined as in \eqref{eq:ftilde_glue} and the corresponding low-rank representation is truncated with a tolerance close to machine precision to remove any redundancy. In the end, the approximations of the entries  of $Q^{(i,k,\ell)}$ are then plugged into \eqref{eq:Ablock_diag}, and thanks to the tensor structure of  $\widehat{\boldsymbol{\S}}_{sub,0}^{(i)}$, we obtain the Tucker approximation \eqref{eq:Ablock_Tucker}. 
As it will be useful in the following, we observe that this approach yields
\begin{equation} \label{eq:Atilde_block2} \vect{v}^T \widetilde{\matb{A}}^{(i,i,k,\ell)} \vect{w} = \int_{[0,1]^3} \left( \nabla \widehat{v}_{ptc}^{(i_i)} \right)^T \widetilde{C}^{(i_1,k,\ell)} \; \nabla \widehat{w}_{ptc}^{(i_1)} \; + \; \left( \nabla \widehat{v}_{ptc}^{(i_2)} \right)^T \widetilde{C}^{(i_2,k,\ell)} \; \nabla \widehat{w}_{ptc}^{(i_2)} \; \d{\underline{\xi}},
 \end{equation}
where $\widehat{v}_{ptc}^{(m)} = v_h \circ \mathcal{F}_m $ and $\widehat{w}_{ptc}^{(m)} = w_h \circ \mathcal{F}_m $, for $m=i_1,i_2$.


\begin{rmk}
Since the
low-rank strategy is meant to  effectively tackle large problems,  we
have considered above the subdomains 
as union of adjacent  patches. 
It would be of course possible to consider a larger number of smaller subdomains.
We emphasize, however, that a large number of
subdomains would require the introduction of a coarse solver. Indeed, it is well-known that
preconditioners based on local domain decomposition without coarse
solver are not robust with respect to the number of
subdomains. 
\end{rmk}

\section{Analysis of the low-rank approximation of the linear system}
\label{sec:analysis}

We now show some theoretical results related to the low-rank approximation of the linear system \eqref{eq:low_rank_sys}. We adopt the following notation: given 
$\underline{v}_h \in \left[ V_h \right]^{3} $, we denote with 
$\nabla \underline{v}_h$ the vector of $\mathbb{R}^9$ obtained by stacking the gradients of the components of $\underline{v}_h$.



Let $\underline{v}_h,\underline{w}_h \in \left[ V_h \right]^3 $, and let $\vect{v},\vect{w}$ be block vectors representing these functions with respect to $\left[  V_h^{(1)} \right]^3,\ldots,$\\
$\left[ V_h^{(\mathcal{N}_{sub})} \right]^3$ and their respective bases. We preliminary observe that
\begin{equation} \label{eq:A_bilinearform} \vect{v}^T \matb{A} \vect{w} = \sum_{m=1}^{\mathcal{N}_{ptc}} \int_{[0,1]^3} \left( \nabla \widehat{\underline{v}}_{ptc}^{(m)} \right)^T C^{(m)} \; \nabla \widehat{\underline{w}}_{ptc}^{(m)} \; \d{\underline{\xi}}, \end{equation}
where $ \widehat{\underline{v}}_{ptc}^{(m)} = \underline{v}_h \circ \mathcal{F}_{m}$, $\widehat{\underline{w}}_{ptc}^{(m)} = \underline{w}_h \circ \mathcal{F}_{m}$, and 
\begin{equation} \label{eq:Cm} C^{(m)}(\underline{\xi}) = \begin{bmatrix} C^{(m,1,1)}(\underline{\xi}) & C^{(m,1,2)}(\underline{\xi}) & C^{(m,1,3)}(\underline{\xi}) \\ C^{(m,2,1)}(\underline{\xi}) & C^{(m,2,2)}(\underline{\xi}) & C^{(m,2,3)}(\underline{\xi}) \\ C^{(m,3,1)}(\underline{\xi}) & C^{(m,3,2)}(\underline{\xi}) & C^{(m,3,3)}(\underline{\xi})  \end{bmatrix} \in \mathbb{R}^{9 \times 9}, \end{equation}
whose blocks are defined in \eqref{eq:Cmkl}.

In the next lemma we show in particular that $\widetilde{\mathbf{A}}$ defines a bilinear form analagous to \eqref{eq:A_bilinearform}.

\begin{lemma} \label{lemma}

Let $\underline{v}_h,\underline{w}_h \in \left[ V_h \right]^3 $, and let $\vect{v},\vect{w}$ be block vectors representing these functions with respect to $\left[  V_h^{(1)} \right]^3,\ldots,\left[ V_h^{(\mathcal{N}_{sub})} \right]^3$ and their respective bases. Then
\begin{align} \label{eq:Atilde_bilinearform} \vect{v}^T \widetilde{\mathbf{A}} \vect{w} & =  \sum_{m=1}^{\mathcal{N}_{ptc}} \int_{[0,1]^3} \left( \nabla \widehat{\underline{v}}_{ptc}^{(m)} \right)^T \widetilde{C}^{(m)} \; \nabla \widehat{\underline{w}}_{ptc}^{(m)} \; \d{\underline{\xi}}, \\ 
\label{eq:ftilde_functional} \vect{v}^T \widetilde{\vect{f}} & = \sum_{m=1}^{\mathcal{N}_{ptc}} \int_{[0,1]^3}  \left(  \widehat{\underline{v}}_{ptc}^{(m)} \right)^T \underline{\widetilde{g}}^{(m)} \; \d{\underline{\xi}},
\end{align}
where $ \widehat{\underline{v}}_{ptc}^{(m)} = \underline{v}_h \circ \mathcal{F}_{m}$, $\widehat{\underline{w}}_{ptc}^{(m)} = \underline{w}_h \circ \mathcal{F}_{m}$, $\widetilde{C}^{(m)}(\underline{\xi})$ is the $3 \times 3$ block matrix whose blocks are the $3 \times 3$ matrices $\widetilde{C}^{(m,k,\ell)}(\underline{\xi})$ from \eqref{eq:low_rank_C} for $k,\ell=1,2,3$, while $\underline{\widetilde{g}}^{(m)}(\underline{\xi}) =  \begin{bmatrix} \widetilde{g}_1^{(m)}(\underline{\xi}) & \widetilde{g}_2^{(m)}(\underline{\xi}) & \widetilde{g}_3^{(m)}(\underline{\xi}) \end{bmatrix}^T $, whose entries are introduced in \eqref{eq:g_tilde}. 

\end{lemma}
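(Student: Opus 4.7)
The plan is to expand $\vect{v}^T \widetilde{\mathbf{A}} \vect{w}$ block by block, rewrite each block via the already-established identities \eqref{eq:Atilde_block1} and \eqref{eq:Atilde_block2}, and then regroup the resulting integrals by patch index $m$. First I would write
\[ \vect{v}^T \widetilde{\mathbf{A}} \vect{w} = \sum_{i,j=1}^{\mathcal{N}_{sub}} \sum_{k,\ell=1}^{3} \vect{v}^{(i,k)T} \widetilde{\mathbf{A}}^{(i,j,k,\ell)} \vect{w}^{(j,\ell)}. \]
For $i \neq j$ with $\Theta^{(i)} \cap \Theta^{(j)} = \Omega^{(m)}$, the block is given by \eqref{eq:Atilde_block1}, which represents it as an integral of $(\nabla \widehat{v})^T \widetilde{C}^{(m,k,\ell)} (\nabla \widehat{w})$ over the parametric cube of patch $m$, with $\widehat{v} = v_h^{(i),k} \circ \mathcal{F}_m$ and analogously for $\widehat{w}$. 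For $i = j$, identity \eqref{eq:Atilde_block2} decomposes the block into a sum of such integrals, one for each patch composing $\Theta^{(i)}$ (the treatment of 4- and 8-patch subdomains is analogous, as already noted in the paper).

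Next I would collect, for each fixed patch $m \in \{1,\dots,\mathcal{N}_{ptc}\}$, all contributions whose integrand lives on the parametric cube of patch $m$. These are exactly the diagonal contributions from every subdomain $j$ with $\Omega^{(m)} \subseteq \Theta^{(j)}$, together with the off-diagonal contributions from every ordered pair $(j,j')$, $j \neq j'$, such that $\Theta^{(j)} \cap \Theta^{(j')} = \Omega^{(m)}$. Together these amount to a double sum over all ordered pairs $(j,j')$ satisfying $\Omega^{(m)} \subseteq \Theta^{(j)} \cap \Theta^{(j')}$. Since the splitting \eqref{eq:solution_split} yields $\underline{v}_h|_{\Omega^{(m)}} = \sum_{j:\,\Omega^{(m)} \subseteq \Theta^{(j)}} \underline{v}_h^{(j)}|_{\Omega^{(m)}}$ (and similarly for $\underline{w}_h$), linearity of the integrand in $\nabla \widehat{v}$ and $\nabla \widehat{w}$ allows collapsing the two inner sums, so that the pulled-back scalar components reassemble into $\widehat{\underline{v}}_{ptc}^{(m)}$ and $\widehat{\underline{w}}_{ptc}^{(m)}$. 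Stacking the nine blocks $\widetilde{C}^{(m,k,\ell)}$ into $\widetilde{C}^{(m)}$ as in \eqref{eq:Cm} then gives \eqref{eq:Atilde_bilinearform}.

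The identity \eqref{eq:ftilde_functional} for the load vector follows by the same strategy, only simpler: expand $\vect{v}^T \widetilde{\vect{f}} = \sum_{i,k} \vect{v}^{(i,k)T} \widetilde{\vect{f}}^{(i,k)}$, apply \eqref{eq:ftilde_block} to split each block into contributions from each patch of $\Theta^{(i)}$, and regroup by patch index $m$; the splitting of $\underline{v}_h$ into subdomain parts collapses the inner sum into $\widehat{\underline{v}}_{ptc}^{(m)}$, yielding the claimed formula.

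The main obstacle I anticipate is the combinatorial bookkeeping in the regrouping step: one must verify that the off-diagonal contributions from \eqref{eq:Atilde_block1} together with the per-patch pieces from \eqref{eq:Atilde_block2} account for \emph{every} ordered pair of subdomains sharing patch $m$, and do so exactly once. This rests on the structural property, already used in the paper, that two distinct subdomains intersect in at most a single patch, making the index $m$ in \eqref{eq:Atilde_block1} unambiguous, and on the fact that the diagonal formula \eqref{eq:Atilde_block2} contributes one and only one integral per patch of $\Theta^{(i)}$.
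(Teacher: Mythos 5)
Your proposal is correct and follows essentially the same route as the paper's proof: expand $\vect{v}^T \widetilde{\matb{A}} \vect{w}$ blockwise via \eqref{eq:Atilde_block1} and \eqref{eq:Atilde_block2}, regroup the per-patch integrals by $m$, and use the splitting \eqref{eq:solution_split} together with bilinearity to collapse the subdomain sums into $\widehat{\underline{v}}_{ptc}^{(m)}$ and $\widehat{\underline{w}}_{ptc}^{(m)}$ (and analogously for $\widetilde{\vect{f}}$). The paper sidesteps the combinatorial bookkeeping you flag by writing every block as a sum over \emph{all} patches $m$ (with null integrands whenever $\Omega^{(m)} \not\subseteq \Theta^{(i)} \cap \Theta^{(j)}$) and then simply exchanging the order of the sums, but this is a presentational difference, not a different argument.
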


\begin{proof}

By considering \eqref{eq:Atilde_block1} and \eqref{eq:Atilde_block2} for $k,\ell=1,2,3,$ we infer
$$ \vect{v}^T \widetilde{\matb{A}} \vect{w} 
= \sum_{i,j=1}^{\mathcal{N}_{sub}}
\left( \vect{v}^{(i)} \right)^T \widetilde{\matb{A}}^{(i,j)} \vect{w}^{(j)} = \sum_{i,j=1}^{\mathcal{N}_{sub}} \; \sum_{m=1}^{\mathcal{N}_{ptc}} \int_{[0,1]^3} \left( \nabla \widehat{\underline{v}}_{ptc}^{(m,i)} \right)^T \widetilde{C}^{(m)} \nabla \widehat{\underline{w}}_{ptc}^{(m,j)} \; \d{\underline{\xi}} ,$$
where $\widehat{\underline{v}}_{ptc}^{(m,i)} = \underline{v}_h^{(i)} \circ \mathcal{F}_m $ and $\widehat{\underline{v}}_{ptc}^{(m,j)} = \underline{v}_h^{(j)} \circ \mathcal{F}_m $. Note that at least one between $\widehat{\underline{v}}_{ptc}^{(m,i)}$ and $\widehat{\underline{w}}_{ptc}^{(m,j)}$  is the null function unless $\Omega^{(m)} \subseteq \Theta^{(i)}\cap \Theta^{(j)}$. 
By switching the order of the sums, we obtain
$$ \sum_{m=1}^{\mathcal{N}_{ptc}} \sum_{i,j=1}^{\mathcal{N}_{sub}}
\int_{[0,1]^3} \left( \nabla \widehat{\underline{v}}_{ptc}^{(m,i)} \right)^T \widetilde{C}^{(m)} \nabla \widehat{\underline{w}}_{ptc}^{(m,j)} \; \d{\underline{\xi}} = \sum_{m=1}^{\mathcal{N}_{ptc}} \int_{[0,1]^3} \left( \sum_{i=1}^{\mathcal{N}_{sub}} \nabla \widehat{\underline{v}}_{ptc}^{(m,i)} \right)^T \widetilde{C}^{(m)} \left( \sum_{j=1}^{\mathcal{N}_{sub}} \nabla \widehat{\underline{w}}_{ptc}^{(m,j)} \right) \; \d{\underline{\xi}}. $$
Then \eqref{eq:Atilde_bilinearform} follows by observing that $\sum_{i=1}^{\mathcal{N}_{sub}} \widehat{\underline{v}}_{ptc}^{(m,i)} = \widehat{\underline{v}}_{ptc}^{(m)} $ and $\sum_{j=1}^{\mathcal{N}_{sub}} \widehat{\underline{w}}_{ptc}^{(m,j)} = \widehat{\underline{w}}_{ptc}^{(m)} $. 
Similarly, considering \eqref{eq:ftilde_block} for $k= 1,2,3$, we infer
$$  \vect{v}^T \widetilde{\vect{f}} = \sum_{i=1}^{\mathcal{N}_{sub}} \left( \vect{v}^{(i)} \right)^T \widetilde{\vect{f}}^{(i)} = \sum_{i=1}^{\mathcal{N}_{sub}} \sum_{i=1}^{\mathcal{N}_{ptc}} \int_{[0,1]^3} \left( \widehat{\underline{v}}_{ptc}^{(m,i)} \right)^T \underline{\widetilde{g}}^{(m)} \; \d{\underline{\xi}} = \sum_{i=1}^{\mathcal{N}_{ptc}} \int_{[0,1]^3} \left( \sum_{i=1}^{\mathcal{N}_{sub}} \widehat{\underline{v}}_{ptc}^{(m,i)} \right)^T \underline{\widetilde{g}}^{(m)} \; \d{\underline{\xi}}  $$
which proves \eqref{eq:ftilde_functional}.
\end{proof}

The following result shows that the low-rank  approximation described in Section \ref{sec:ptc_union} does not alter the kernel of the system matrix, and that problem \eqref{eq:low_rank_sys}  admits a solution. This result is based on the assumption that the matrix  $\widetilde{C}^{(m)}(\underline{\xi})$ introduced in Lemma \ref{lemma} is positive definite everywhere, which is clearly satisfied provided that the low-rank approximations \eqref{eq:low_rank_C} are accurate enough.

\begin{thm} Assume that the matrix $\widetilde{C}^{(m)}(\underline{\xi})$ defined in Lemma \ref{lemma} is positive definite for every $m = 1,\ldots, \mathcal{N}_{ptc}$ and for every $\underline{\xi} \in [0,1]^3$. Then it holds
$$ \ker \left( \widetilde{\matb{A}} \right) = \ker \left( \matb{A} \right) \qquad \text{and} \qquad \widetilde{\vect{f}} \in \mathrm{range} \left( \widetilde{\matb{A}} \right). $$
\end{thm}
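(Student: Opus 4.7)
The plan is to use Lemma \ref{lemma} to rewrite both the quadratic form associated with $\widetilde{\matb{A}}$ and the linear functional associated with $\widetilde{\vect{f}}$ as integrals over the parametric patch domains, and then exploit the pointwise positive definiteness of $\widetilde{C}^{(m)}$ to characterize the kernel. Since $\widetilde{\matb{A}}$ is symmetric and positive semidefinite (by Lemma \ref{lemma} together with the positive definiteness of $\widetilde C^{(m)}$), we have $\vect{v} \in \ker(\widetilde{\matb{A}})$ if and only if $\vect{v}^T \widetilde{\matb{A}} \vect{v} = 0$, and analogously for $\matb{A}$ (which corresponds to the bilinear form $a(\cdot,\cdot)$ evaluated on the assembled function $\underline{u}_h = \sum_i \underline{u}_h^{(i)}$).

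First, I would prove $\ker(\matb{A}) \subseteq \ker(\widetilde{\matb{A}})$. If $\vect{v} \in \ker(\matb{A})$, then $a(\underline{v}_h,\underline{v}_h) = 0$ where $\underline{v}_h = \sum_i \underline{v}_h^{(i)}$. Under Assumption \ref{ass:bc} and the non-emptiness of $\partial\Omega_D$, Korn's inequality yields $\underline{v}_h \equiv 0$ on $\Omega$, so $\widehat{\underline{v}}_{ptc}^{(m)} = \underline{v}_h \circ \mathcal{F}_m = 0$ for every $m$. Plugging this into \eqref{eq:Atilde_bilinearform} gives $\vect{v}^T \widetilde{\matb{A}} \vect{w} = 0$ for every $\vect{w}$, hence $\vect{v} \in \ker(\widetilde{\matb{A}})$.

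Next, for the reverse inclusion, let $\vect{v} \in \ker(\widetilde{\matb{A}})$. By \eqref{eq:Atilde_bilinearform} and the hypothesis that $\widetilde{C}^{(m)}(\underline{\xi})$ is pointwise positive definite, the identity $\vect{v}^T \widetilde{\matb{A}} \vect{v} = 0$ forces $\nabla \widehat{\underline{v}}_{ptc}^{(m)} = 0$ pointwise on $[0,1]^3$ for every $m$. Thus each $\widehat{\underline{v}}_{ptc}^{(m)}$, and consequently $\underline{v}_h$ restricted to $\Omega^{(m)}$, is componentwise constant. The inclusion $[V_h]^3 \subseteq [H^1(\Omega)]^3$ (built into the definition \eqref{eq:comp_space}) forces these constants to agree across patch interfaces, so $\underline{v}_h$ is a single constant vector on each connected component of $\Omega$; the homogeneous Dirichlet condition on $\partial\Omega_D$ (nonempty on each component, up to the standard assumption) then pins this constant to zero. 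Therefore $\underline{v}_h = 0$, which by the first direction implies $\vect{v}^T \matb{A} \vect{v} = 0$ and $\vect{v} \in \ker(\matb{A})$.

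For the range statement, I would use the symmetry of $\widetilde{\matb{A}}$, which gives $\mathrm{range}(\widetilde{\matb{A}}) = \ker(\widetilde{\matb{A}})^\perp$. Thus it suffices to show $\widetilde{\vect{f}} \perp \ker(\widetilde{\matb{A}})$. If $\vect{v} \in \ker(\widetilde{\matb{A}}) = \ker(\matb{A})$, we have already established $\underline{v}_h = 0$, hence $\widehat{\underline{v}}_{ptc}^{(m)} = 0$ for every $m$; substituting into \eqref{eq:ftilde_functional} gives $\vect{v}^T \widetilde{\vect{f}} = 0$, completing the proof. The main subtlety in this argument is the step from $\nabla \widehat{\underline{v}}_{ptc}^{(m)} = 0$ per patch to $\underline{v}_h \equiv 0$ globally, which genuinely uses all three ingredients: interpatch $H^1$ conformity (so patchwise constants glue), connectedness of $\Omega$ (so a single constant survives), and the Dirichlet condition (to kill it); this is the only place where the positive semidefiniteness of $\widetilde{\matb{A}}$ does not by itself suffice.
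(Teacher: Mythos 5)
Your proof is correct and follows essentially the same route as the paper's: both use the bilinear-form representations of Lemma \ref{lemma} together with the pointwise positive definiteness of $\widetilde{C}^{(m)}$ to identify $\ker\left( \widetilde{\matb{A}} \right)$ (and $\ker\left( \matb{A} \right)$) with the set of block vectors representing the null function, and then obtain the range statement from $\mathrm{range}\left( \widetilde{\matb{A}} \right) = \ker\left( \widetilde{\matb{A}} \right)^{\perp}$ via \eqref{eq:ftilde_functional}. If anything, you are more explicit than the paper at the step where $\nabla \widehat{\underline{v}}_{ptc}^{(m)} = 0$ on each patch is upgraded to $\underline{v}_h \equiv 0$ (interpatch continuity, connectedness, and the Dirichlet condition), a point the paper passes over silently.
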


\begin{proof}

Let $\vect{v}$ be a block vector representing the function $\underline{v}_h \in \left[ V_h \right]^3$. 
Thanks to Lemma \ref{lemma}, we have
$$ \vect{v}^T \widetilde{\matb{A}} \vect{v} = \sum_{m=1}^{\mathcal{N}_{ptc}} \int_{[0,1]^3} \left( \nabla \widehat{\underline{v}}_{ptc}^{(m)} \right)^T \widetilde{C}^{(m)} \nabla \widehat{\underline{v}}_{ptc}^{(m)} \; \d{\underline{\xi}} \geq \min_{m=1,\ldots,\mathcal{N}_{ptc}} \; \min_{\underline{\xi} \in [0,1]^3}  \lambda_{\min} \left( \widetilde{C}^{(m)} (\underline{\xi}) \right) \sum_{m=1}^{\mathcal{N}_{ptc}} \int_{[0,1]^3} \Vert \nabla \widehat{\underline{v}}_{ptc}^{(m)} \Vert_2^2 \; \d{\underline{\xi}}.$$ 
The latter term is zero if and only if $\widehat{\underline{v}}_{ptc}^{(m)}$ is the null function for every $m = 1,\ldots,\mathcal{N}_{ptc}$, which is equivalent to require that $\underline{v}_h$ is the null function. Since $\ker \left( \matb{A} \right)$ is the set of vectors representing the null function, we have that $\ker \left( \matb{A} \right) = \ker \left( \widetilde{\matb{A}} \right) $. Then,
from \eqref{eq:ftilde_functional} we infer that $\vect{v}^T \widetilde{\vect{f}} = 0 $ whenever $\vect{v}$ is a block vector representing the null function. Therefore, $\widetilde{\vect{f}} \in \ker \left( \widetilde{\matb{A}}\right)^\perp = \text{range} \left( \widetilde{\matb{A}} \right) $.


\end{proof}

We are now ready to prove an upper bound on the  relative error in the 2-norm committed by approximating $\matb{A}$ with $\widetilde{\matb{A}}$, in terms of the accuracy of the low-rank approximations \eqref{eq:low_rank_C}.

\begin{thm}
Assume that, for every $\alpha,\beta,k,\ell = 1,2,3$ and for every $m = 1,\ldots, \mathcal{N}_{ptc}$, the low-rank approximations \eqref{eq:low_rank_C} satify
\begin{equation} \label{eq:matrix_error_bound} \max_{\underline{\xi} \in [0,1]^3}\vert C_{\alpha,\beta}^{(m,k,\ell)}(\underline{\xi}) - \widetilde{C}_{\alpha,\beta}^{(m,k,\ell)}(\underline{\xi}) \vert \leq \zeta \end{equation}
for some  $\zeta\geq 0$. Then 
$$ \frac{\Vert \matb{A} - \widetilde{\matb{A}} \Vert_2}{\Vert \matb{A} \Vert_2} \leq \frac{9 \;  \zeta}{ \displaystyle \min_{m=1,\ldots,\mathcal{N}_{ptc}} \; \min_{\underline{\xi} \in [0,1]^3} \lambda_{\min} \left( C^{(m)}\left( \underline{\xi}\right) \right) },$$
where $\lambda_{\min} \left( C^{(m)}\left( \underline{\xi} \right) \right)$ is the minimum eigenvalue of the matrix $C^{(m)}\left(\underline{\xi}\right) \in \mathbb{R}^{9 \times 9}$ defined in \eqref{eq:Cm}.
\end{thm}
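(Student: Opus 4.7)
The plan is to exploit the bilinear-form identities already established: by subtracting the representation \eqref{eq:Atilde_bilinearform} from \eqref{eq:A_bilinearform}, for any block vectors $\vect{v}, \vect{w}$ associated to functions $\underline{v}_h, \underline{w}_h \in [V_h]^3$ we obtain
\begin{equation*}
\vect{v}^T (\matb{A} - \widetilde{\matb{A}}) \vect{w} = \sum_{m=1}^{\mathcal{N}_{ptc}} \int_{[0,1]^3} \left( \nabla \widehat{\underline{v}}_{ptc}^{(m)} \right)^T \left( C^{(m)}(\underline{\xi}) - \widetilde{C}^{(m)}(\underline{\xi}) \right) \nabla \widehat{\underline{w}}_{ptc}^{(m)} \, d\underline{\xi}.
\end{equation*}
This reduces the matter to controlling a sum of integrals whose integrands involve the $9 \times 9$ perturbation matrices $C^{(m)} - \widetilde{C}^{(m)}$.

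The first key step is a pointwise spectral-norm bound on each $C^{(m)}(\underline{\xi}) - \widetilde{C}^{(m)}(\underline{\xi})$. Since all 81 entries are bounded in absolute value by $\zeta$ by assumption \eqref{eq:matrix_error_bound}, its Frobenius norm is at most $\sqrt{81}\,\zeta = 9\zeta$, and hence so is its spectral norm; this is precisely where the factor $9$ in the theorem originates. Applying this pointwise bound together with Cauchy--Schwarz (first in $\mathbb{R}^9$ pointwise in $\underline{\xi}$, then in $L^2([0,1]^3)$ across the patches) yields
\begin{equation*}
\left| \vect{v}^T (\matb{A} - \widetilde{\matb{A}}) \vect{w} \right| \le 9\zeta \left( \sum_{m=1}^{\mathcal{N}_{ptc}} \int_{[0,1]^3} \Vert \nabla \widehat{\underline{v}}_{ptc}^{(m)} \Vert_2^2 \, d\underline{\xi} \right)^{\!1/2} \left( \sum_{m=1}^{\mathcal{N}_{ptc}} \int_{[0,1]^3} \Vert \nabla \widehat{\underline{w}}_{ptc}^{(m)} \Vert_2^2 \, d\underline{\xi} \right)^{\!1/2}.
\end{equation*}
The second key step is to control the right-hand side by the quadratic forms of $\matb{A}$. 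Setting $\lambda_\ast = \min_m \min_{\underline{\xi}} \lambda_{\min}(C^{(m)}(\underline{\xi}))$, the identity \eqref{eq:A_bilinearform} and the minimum-eigenvalue bound give $\vect{v}^T \matb{A} \vect{v} \ge \lambda_\ast \sum_m \int \Vert \nabla \widehat{\underline{v}}_{ptc}^{(m)}\Vert_2^2 \, d\underline{\xi}$, so the parenthesized integrals are bounded by $\vect{v}^T \matb{A} \vect{v}/\lambda_\ast$ and $\vect{w}^T \matb{A} \vect{w}/\lambda_\ast$ respectively.

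Finally, since $\matb{A}$ is symmetric positive semidefinite, $\vect{v}^T \matb{A} \vect{v} \le \Vert \matb{A} \Vert_2 \Vert \vect{v} \Vert_2^2$ and similarly for $\vect{w}$, yielding $|\vect{v}^T (\matb{A} - \widetilde{\matb{A}}) \vect{w}| \le (9\zeta/\lambda_\ast)\,\Vert \matb{A} \Vert_2 \Vert \vect{v} \Vert_2 \Vert \vect{w} \Vert_2$; taking the supremum over unit block vectors gives the claim. The main subtle point I expect is making sure the constant $\lambda_\ast$ appearing in the denominator is truly the minimum eigenvalue of the \emph{exact} $C^{(m)}$ rather than of $\widetilde{C}^{(m)}$: this is what falls out naturally from the argument because the coercivity is applied to $\matb{A}$ (not $\widetilde{\matb{A}}$) in order to obtain the quadratic form $\vect{v}^T\matb{A}\vect{v}$ in the final step, which then pairs with $\Vert \matb{A} \Vert_2$ in the denominator of the relative error. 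Beyond that, the argument is a standard continuity/coercivity estimate transported into the discrete multipatch setting through the integral representations of Lemma~\ref{lemma}.
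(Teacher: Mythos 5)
Your proposal is correct and follows essentially the same route as the paper: both reduce the matrix perturbation to the integral representation of the bilinear form, bound the pointwise perturbation $C^{(m)}-\widetilde{C}^{(m)}$ by $9\zeta$ (you via the Frobenius norm, the paper via $\lambda_{\max}\le\Vert\cdot\Vert_\infty$, both yielding the same constant), and then use the coercivity constant $\lambda_{\min}(C^{(m)})$ to convert the gradient integrals back into $\vect{v}^T\matb{A}\vect{v}\le\Vert\matb{A}\Vert_2\Vert\vect{v}\Vert_2^2$. The only cosmetic difference is that you estimate the full bilinear form with Cauchy--Schwarz, whereas the paper restricts to the quadratic form and invokes the symmetric characterization of the $2$-norm.
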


\begin{proof}
Since $\matb{A}$ and $\widetilde{\matb{A}}$ are symmetric, it holds
$$ \Vert \matb{A} - \widetilde{\matb{A}} \Vert_2 = \max_{\vect{v}} \frac{\vect{v}^T \left( \matb{A} - \widetilde{\matb{A}} \right) \vect{v}}{ \Vert \vect{v} \Vert_2^2}.$$
Exploiting \eqref{eq:Atilde_block1} and \eqref{eq:Atilde_block2}, we infer 
\begin{align*} \vect{v}^T \left( \matb{A} - \widetilde{\matb{A}} \right) \vect{v} & =  \sum_{m=1}^{\mathcal{N}_{ptc}}
\int_{[0,1]^3} \left( \nabla \widehat{\underline{v}}_{ptc}^{(m)} \right)^T \left( C^{(m)} - \widetilde{C}^{(m)}\right) \nabla \widehat{\underline{v}}_{ptc}^{(m)} \; \d{\underline{\xi}} \\
& \leq \max_{m = 1,\ldots,\mathcal{N}_{ptc}} \; \max_{\underline{\xi} \in [0,1]^3} \lambda_{\max} \left( C^{(m)}(\underline{\xi}) - \widetilde{C}^{(m)}(\underline{\xi}) \right) \; \sum_{m=1}^{\mathcal{N}_{ptc}}
\int_{[0,1]^3} \Vert \nabla \widehat{\underline{v}}_{ptc}^{(m)} \Vert^2_2 \; \d{\underline{\xi}} \\
& \leq \frac{\displaystyle \max_{m = 1,\ldots,\mathcal{N}_{ptc}} \; \max_{\underline{\xi} \in [0,1]^3} \lambda_{\max} \left( C^{(m)}(\underline{\xi}) - \widetilde{C}^{(m)}(\underline{\xi}) \right)}{\displaystyle \min_{m=1,\ldots,\mathcal{N}_{ptc}} \; \min_{\underline{\xi} \in [0,1]^3} \lambda_{\min} \left( C^{(m)}\left( \underline{\xi}\right) \right)} \sum_{m=1}^{\mathcal{N}_{ptc}}
\int_{[0,1]^3} \left( \nabla \widehat{\underline{v}}_{ptc}^{(m)} \right)^T  C^{(m)} \nabla \widehat{\underline{v}}_{ptc}^{(m)} \; \d{\underline{\xi}} \\
& = \frac{\displaystyle \max_{m = 1,\ldots,\mathcal{N}_{ptc}} \; \max_{\underline{\xi} \in [0,1]^3} \lambda_{\max} \left( C^{(m)}(\underline{\xi}) - \widetilde{C}^{(m)}(\underline{\xi}) \right)}{\displaystyle \min_{m=1,\ldots,\mathcal{N}_{ptc}} \; \min_{\underline{\xi} \in [0,1]^3} \lambda_{\min} \left( C^{(m)}\left( \underline{\xi}\right) \right)} \;\vect{v}^T \matb{A} \vect{v}.
 \end{align*}
Inequality \eqref{eq:matrix_error_bound} then follows by observing that $ \displaystyle \max_{\vect{v}} \frac{\vect{v}^T \matb{A} \vect{v}}{\Vert \vect{v} \Vert_2^2} = \Vert \matb{A} \Vert_2 $ and that
$$ \lambda_{\max} \left( C^{(m)} (\underline{\xi})  - \widetilde{C}^{(m)}(\underline{\xi}) \right) \leq \Vert  C^{(m)}(\underline{\xi}) - \widetilde{C}^{(m)}(\underline{\xi}) \Vert_{\infty} \leq 9 \; \max_{\alpha,\beta,k,\ell = 1,2,3} \vert C_{\alpha, \beta}^{(m,k,\ell)}(\underline{\xi}) - \widetilde{C}_{\alpha, \beta}^{(m,k,\ell)}(\underline{\xi}) \vert \leq 9 \; \zeta $$
for every $m=1,\ldots, \mathcal{N}_{ptc}$ and for every $\underline{\xi} \in [0,1]^3$.

\end{proof}

\section{Numerical experiments}
\label{sec:numerics} 
In this section we present some numerical experiments to show the behavior of our low-rank solving strategy for multipatch geometries. 
All the tests are performed with 
Matlab R2022b on an Intel(R) Core(TM) i9-12900K, running at  3.20 GHz, and with 128 GB of RAM.

We used GeoPDEs toolbox \cite{Vazquez2016} to handle isogeometric discretizations. The approximation of the right-hand side and the linear system matrix are computed using Chebfun toolbox functions \cite{Driscoll2014}.

In all the tests, we employ a dyadic refinement and every patch has
the same number of elements $n_{el}=2^{L}$ in each parametric
direction and for different discretization levels $L$. The numerical
solution is computed with the TPCG method, discussed in Section
\ref{sec:tpcg}, with relative tolerance $tol=10^{-6}$ and the zero
vector as initial guess. In all the numerical tests we set  the right-hand side equal to $\underline{f}=[0, \ 0, \ -1]^T$. The  system matrix and the right-hand side
are approximated with the technique of Section \ref{sec:ptc_union} by
setting the relative tolerance of {\tt Chebfun3F} equal to $10^{-1} tol = 10^{-7}$ (see \cite{Dolgov2021} for more details).
The dynamic truncation described in Section \ref{sec:tpcg} is used fixing the initial relative tolerance as $\epsilon_0=10^{-1}$ and the minimum relative tolerance as $\epsilon_{\min}=tol\|\vect{f}\|_2 10^{-1}$. The threshold $\delta$ is set equal to $10^{-3}$, the reducing factor $\alpha$ is set equal to 0.5. The relative tolerance parameter $\beta$ is chosen as $10^{-1}$, while the intermediate relative tolerance $\gamma$ is chosen as $10^{-8}$. The relative tolerance $\epsilon_{rel}^{prec}$ for the approximation of the diagonal blocks of the preconditioner (see \cite{Montardini2023} for details) is set equal to $10^{-1}$. We experimentally verified that these choices of the parameters yield good performance of the TPCG method.

{
The Lam\'e parameters are defined as
\begin{equation} \label{eq:lame}
\mu = \frac{E}{2 \left( 1 + \nu \right)}, \qquad \lambda = \frac{E \nu}{\left( 1 + \nu \right)\left( 1 - 2\nu \right)},
\end{equation}
where $E$ is the Young's modulus and $\nu$ the Poisson ratio. Throughout this section, unless otherwise specified, we set $E=1$ and $\nu=0.3$.
}

Let  
$$\widetilde{\vect{u}}^{(j)}:= \begin{bmatrix} \widetilde{\vect{u}}^{(j,1)} \\ \widetilde{\vect{u}}^{(j,2)} \\\widetilde{\vect{u}}^{(j,3)} \end{bmatrix}, \qquad j = 1,\ldots,\mathcal{N}_{sub} $$ 
be the solution of the linear elasticity problem in Tucker format in each subdomain. We denote with $(R^{\vect{u}}_1(j,d), R^{\vect{u}}_2(j,d), R^{\vect{u}}_3(j,d))$ the multilinear rank of $\widetilde{\vect{u}}^{(j,d)}$, for $ d=1,2,3$ and $j=1,\dots,\numsub$. { The memory requirements in percentage of the low-rank solution with respect to the full solution
is defined as  
\begin{equation}
\label{eq:mem_comp}
\text{memory requirements in \%}: = \frac{\sum_{j=1}^{\numsub}\sum_{d=1}^3 \left(R^{\vect{u}}_1(j,d)n_{sub,1}^{(j)} + R^{\vect{u}}_2(j,d)n_{sub,2}^{(j)} + R^{\vect{u}}_3(j,d)n_{sub,3}^{(j)}\right)}{N_{dof}}\cdot 100,
\end{equation}}
where $N_{dof}:=\sum_{j=1}^{\numsub} n_{sub,1}^{(j)} n_{sub,2}^{(j)} n_{sub,3}^{(j)}$ represents the total number of degrees of freedom, i.e. the dimension of the space $\sum_{j=1}^{\numsub}V_{sub}^{(j)}$. 
Note that we only consider the memory compression of the solution and not the memory compression of the right-hand side and of the linear system matrix, that has already been analyzed in \cite{Mantzaflaris2017,Hofreither2018,Bunger2020}.

In the numerical experiments we show the maximum of the multilinear rank of all the components of the solution. Thanks to the strategy employed for the truncation of the iterates $\vect{r}_k, \vect{z}_k , \vect{p}_k$ and $\vect{q}_k$, the  maxima   of their multilinear ranks is comparable to the one of $\vect{u}_k$ when convergence approaches, and thus their values are not reported.

 Perhaps not surprisingly, our numerical experience indicates that the main effort of the TPCG iteration in terms of computation time is represented by the truncation steps. Anyway, since our Matlab implementation is not optimized, we prefer not to report a complete distribution of the computation time throughout the algorithm.

\subsection{L-shaped domain}
In this test we consider a three dimensional L-shaped domain, as represented in Figure \ref{fig:in_dom}, where different colors represent different patches. We have two subdomains, one is the union of the yellow and light green patches and the other the union of the light green and dark green patches.
We 
  consider  homogeneous Dirichlet boundary conditions on the patch faces that lie on the planes $\{x=-1\}\cup\{z=1\}\cup \{x=0\} \cup \{z=0\} $, and homogeneous Neumann boundary conditions elsewhere. 
 
\begin{figure}[H]
\centering 
   \includegraphics[scale=0.40]{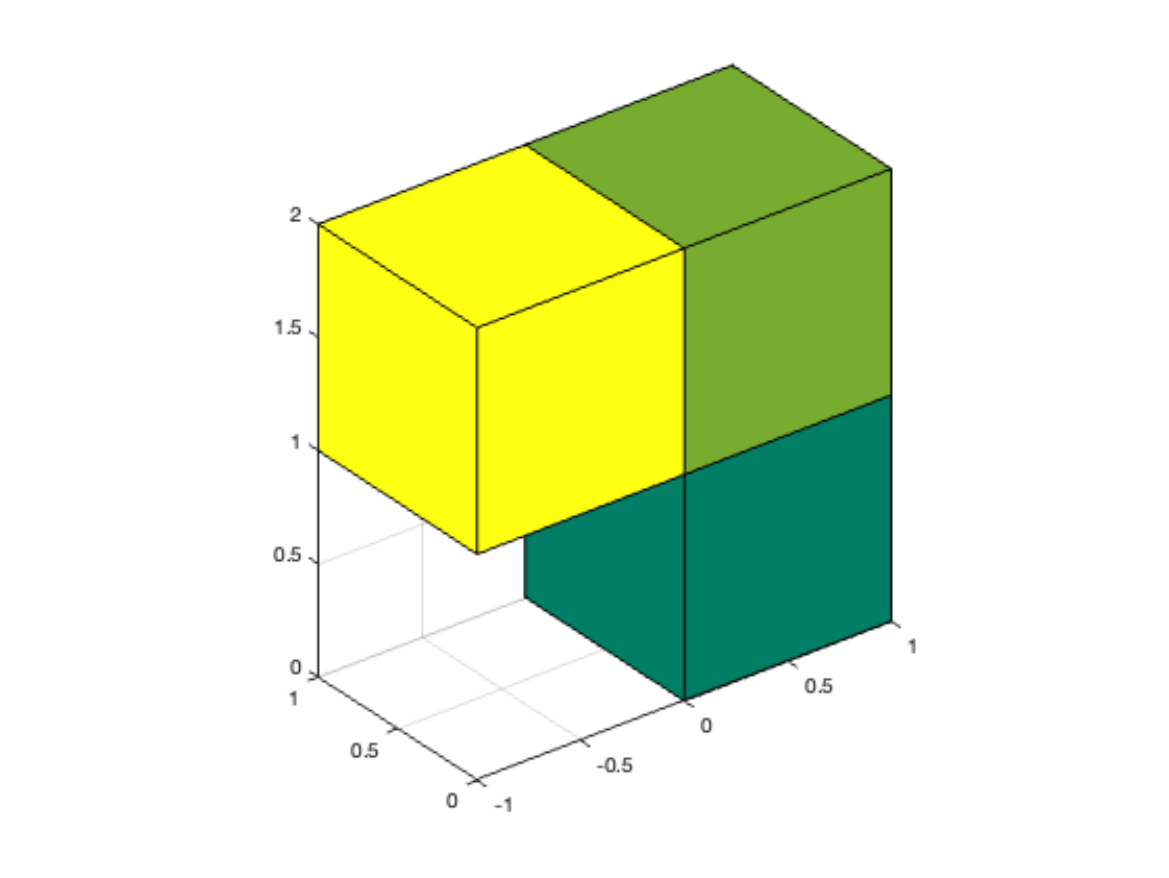}
  \caption{L-shaped domain.}\label{fig:in_dom}
\end{figure}

For this domain, the multilinear ranks of the blocks of $\widetilde{\matb{A}}$ 
(note that here the geometry is trivial for all patches, therefore $\matb{A} = \widetilde{\matb{A}}$) 
are
$$
 \left(R^A_1(i,j,k,k),R^A_2(i,j,k,k),R^A_3(i,j,k,k)\right) = (3,3,3), \qquad k=1,2,3,
$$
and 
$$\left(R^A_1(i,j,k,\ell), R^A_2(i,j,k,\ell),R^A_3(i,j,k,\ell)\right) = (2,2,2), \qquad k,\ell = 1,2,3, \; k \neq \ell, $$
for $i,j = 1,2$.

In Table \ref{tab:its_Lshaped} we report the number of iterations for degrees $p=3,4,5$ and number of elements per patch per parametric direction $n_{el}=2^{L}$ with $L=5,6,7,8.$ The number of iterations  is almost independent of the degree and grows only mildly with the number of elements $n_{el}$. We emphasize that for the finer discretization level the number of degrees of freedom is roughly 200 millions.

 {\R1 \renewcommand\arraystretch{1.4} 
\begin{table}[H]
\begin{center}
\begin{tabular}{|c|c|c|c|}
\hline
 & \multicolumn{3}{|c|}{ \ Iteration number} \\
 \hline$n_{el}$  & $p=3$ &$p=4$  & $p=5$ \\
\hline  
32    &   36  &  34   &  35 \\
\hline
64    &   38  &  36  &   37  \\ 
\hline
128   &   42  &   40 &  41  \\ 
\hline
256    &  51   & 48    &  49  \\ 
\hline
\end{tabular}
\caption{Number of iterations for the L-shaped domain.}
\label{tab:its_Lshaped}
\end{center}
\end{table}}

Figure \ref{fig:rank_L} shows the maximum of the  multilinear ranks of the solution in all the displacement directions and in all the subdomains: the ranks grow as the degree and number of elements is increased, implying that the solution has not a low rank. 
The memory compression \eqref{eq:mem_comp}, represented in Figure \ref{fig:mem_L}, highlights the great memory saving that we have with the multipatch low-rank strategy with respect to the full one. 
Note that the maximum rank of the solution is rather large, between 100 and 150 on the finest discretization level. Nevertheless, the memory compression is around $1\%$ for this level.

\begin{figure}[H]
 \centering  
     \subfloat[][Maximum of the multilinear rank of the solution.\label{fig:rank_L}] 
   {\includegraphics[scale=0.43]{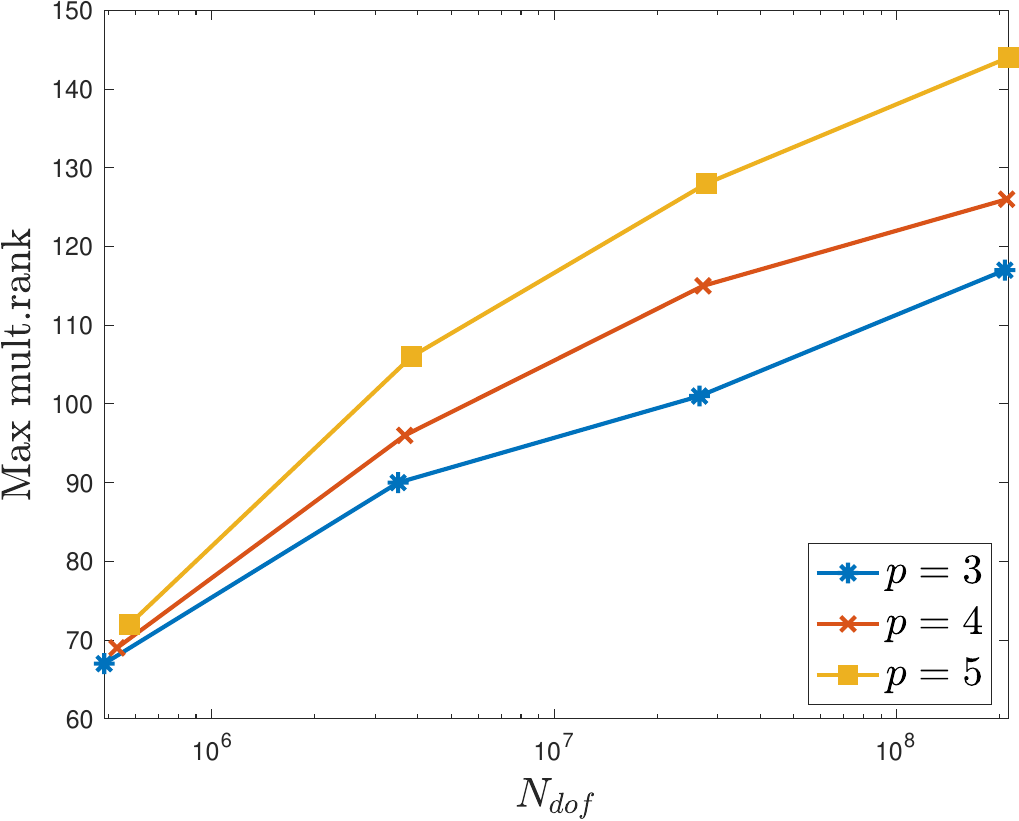}}\quad
 \subfloat[][Memory requirements.\label{fig:mem_L}]
   {\includegraphics[scale=0.43]{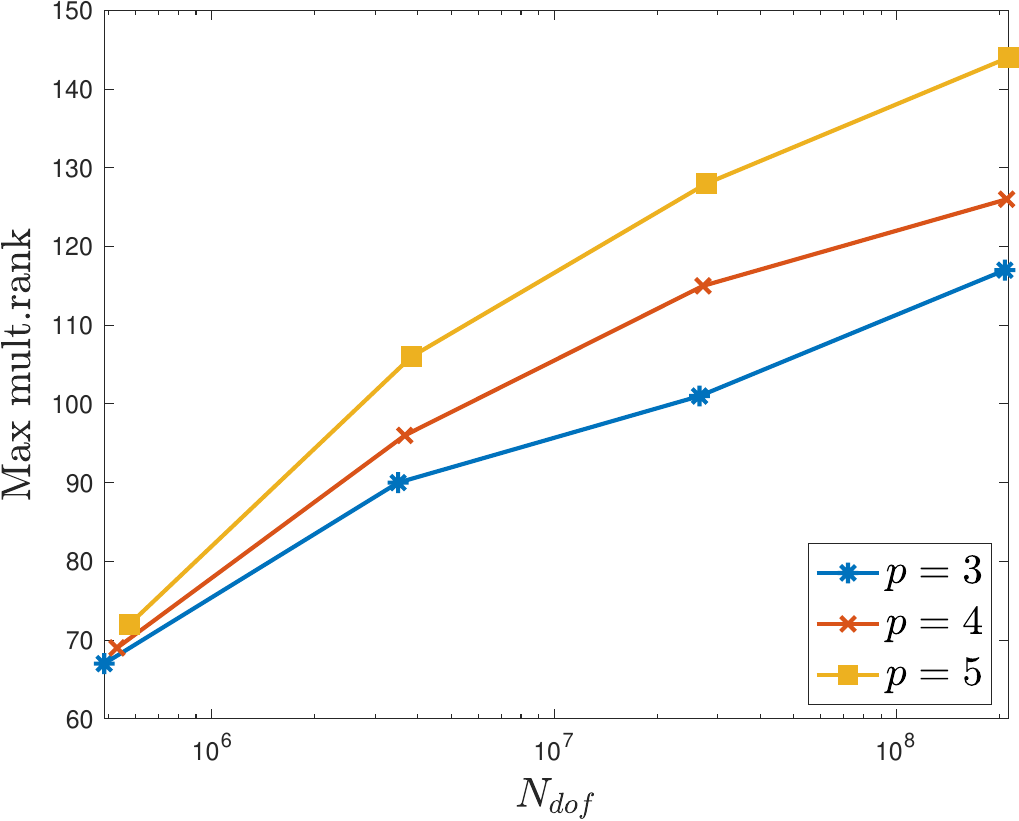}}
   \caption{Results for the L-shaped domain.}
 \label{fig:L_results}
\end{figure}

{
\subsection{3D-cross domain}
In this test, we consider a 3D-cross domain,  as represented in Figure \ref{fig:cross}. Note that in this domain we have that the central cube belongs to six subdomains, which is the maximum allowed by our strategy. We consider homogeneous Dirichlet boundary conditions everywhere, except for the external faces. 
The multilinear ranks of the blocks of $\widetilde{\mathbf{A}}$ are 
$$
 \left(R^A_1(i,j,k,k),R^A_2(i,j,k,k),R^A_3(i,j,k,k)\right) = (3,3,3), \qquad k=1,2,3,
$$
and 
$$\left(R^A_1(i,j,k,\ell), R^A_2(i,j,k,\ell),R^A_3(i,j,k,\ell)\right) = (2,2,2), \qquad k,\ell = 1,2,3, \; k \neq \ell, $$
for $i,j = 1,\ldots,6$. 
}

\begin{figure}[H]
\begin{center}
\includegraphics[scale=0.35]{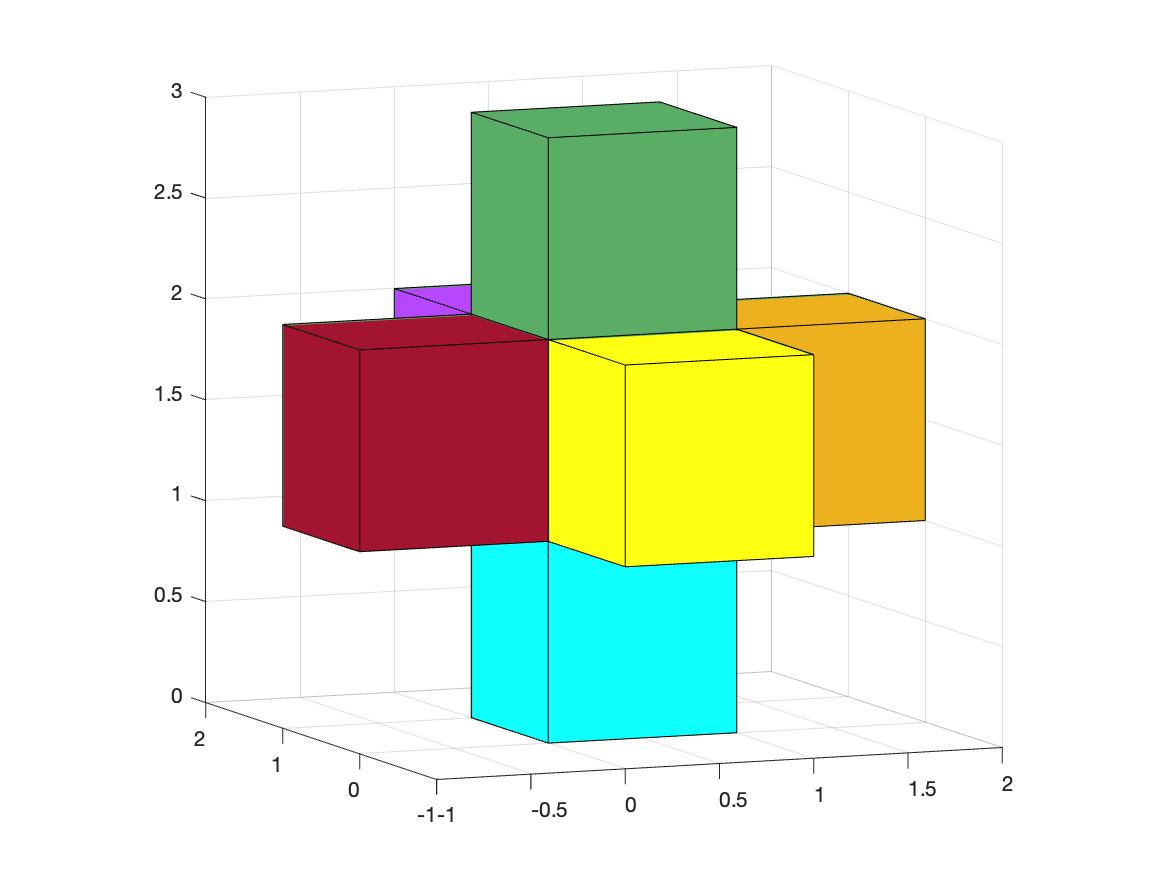}
  \caption{3D-cross domain.}\label{fig:cross}
     \end{center}
\end{figure}

{

The number of iterations for degrees  $p=3,4,5$ and number of elements per parametric direction $n_{el}=2^L$ with $L=5,6,7,8$ is reported in Table \ref{tab:its_Cross}. The number of iterations is almost independent on the degree and mesh-size. These numbers are similar, although slightly higher, to the ones of Table \ref{tab:its_Lshaped} relative to the L-shaped domain: the proposed low-rank strategy seems  to be independent on the number of subdomains to which a patch belongs.
 {\renewcommand\arraystretch{1.4} 
\begin{table}[H]
\begin{center}
\begin{tabular}{|c|c|c|c|}
\hline
 & \multicolumn{3}{|c|}{ \ Iteration number} \\
 \hline$n_{el}$ &   $p=3$ &$p=4$  & $p=5$ \\
\hline  
32 &     51  &  51   & 50 \\
\hline
64 &       54  & 52  &  51  \\ 
\hline
128 &      53 &  54  & 53  \\ 
\hline
256 &     57  &  57   &  57 \\ 
\hline
\end{tabular}
\caption{Number of iterations for the cross domain.}
\label{tab:its_Cross}
\end{center}
\end{table}}

We report in Figure \ref{fig:rank_cross} the maximum of the multilinear ranks of the solution in all the directions and for all the subdomains. The multilinear rank of the solution increases with the number of elements per direction $n_{el}$. However,  the memory storage for the solution is hugely reduced with respect to the full solution and reaches values around $1\%$ for the highest discretization level, as represented in Figure \ref{fig:mem_cross}.

\begin{figure}[H]
 \centering  
     \subfloat[][Maximum of the multilinear rank of the solution.\label{fig:rank_cross}] 
   {\includegraphics[scale=0.40]{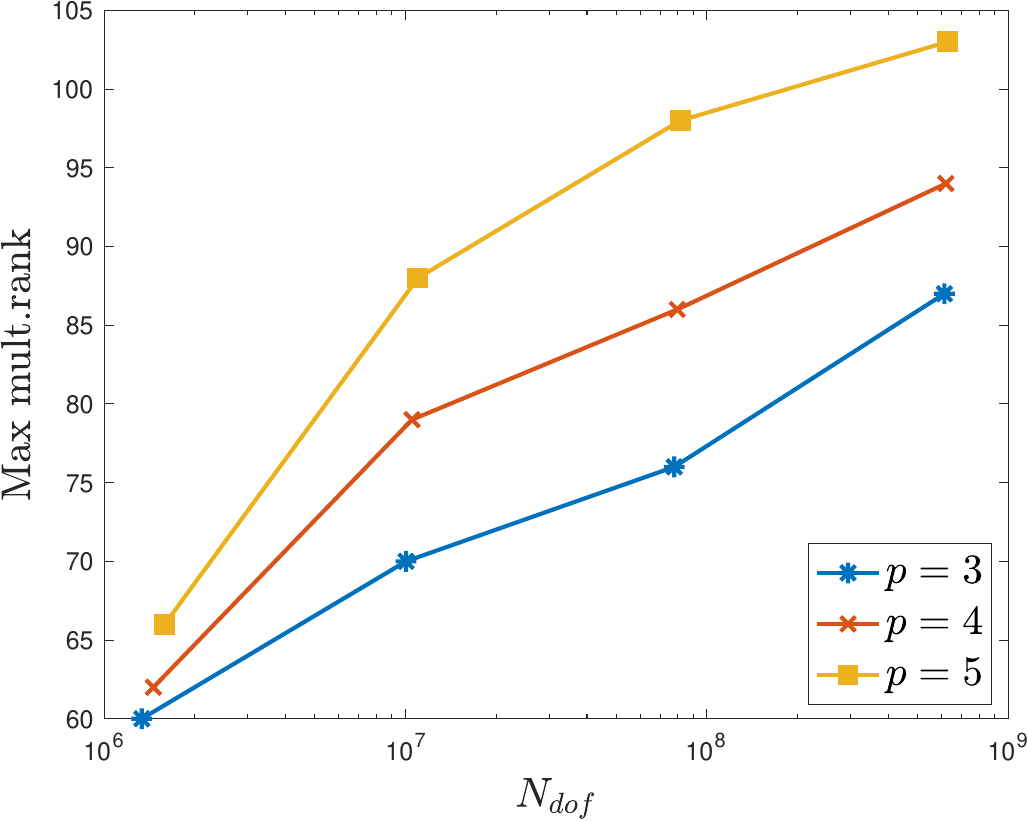}}\quad
 \subfloat[][Memory requirements.\label{fig:mem_cross}]
   {\includegraphics[scale=0.40]{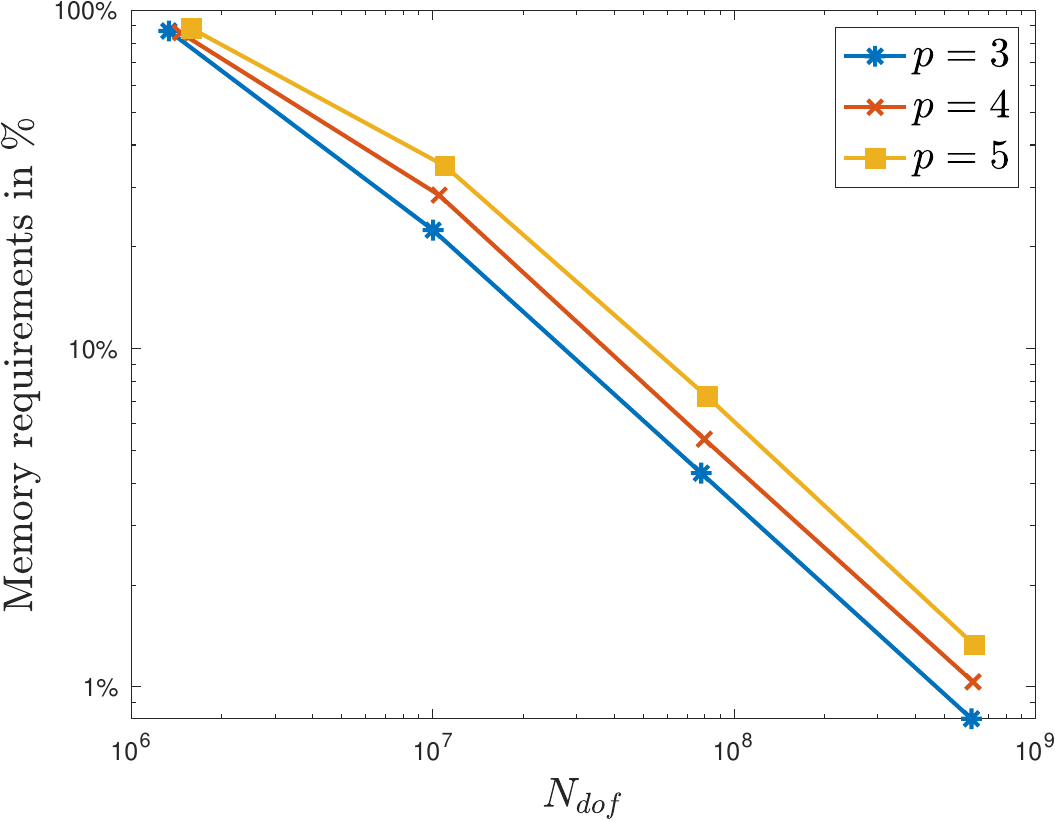}}
   \caption{Results for the 3D-cross domain.}
 \label{fig:cross_results}
\end{figure}}

{\R1
\subsection{Thick square with jumping coefficient}
In this test, we consider a thick square $[0,2]\times [0,2]\times [0,1]$  composed by  9 patches, as represented in Figure \ref{fig:thick_square}. 
According to the procedure described at the beginning of Section \ref{sec:ptc_union},
we split $\Omega$ into 4 subdomains, each   obtained by merging the 4 patches around an inner edge. We  
 impose homogeneous Dirichlet boundary conditions everywhere, except for the side $\{z=1\}$, where we impose homogeneous Neumann boundary conditions. We consider jumping Lamé parameters, by setting $\nu=0.3$ and $E=1$ on green patches and $E=6$ on red patches. 
The multilinear ranks of the blocks of $\widetilde{\mathbf{A}}$ are 
$$
 \left(R^A_1(i,i,k,k),R^A_2(i,i,k,k),R^A_3(i,i,k,k)\right) = (6,6,6), \qquad k=1,2,3,
$$ 
and
$$\left(R^A_1(i,i,k,\ell), R^A_2(i,i,k,\ell),R^A_3(i,i,k,\ell)\right) = (4,4,4), \qquad k,\ell = 1,2,3, \; k \neq \ell, $$
for $i = 1,\ldots,4$,
$$
\left(R^A_1(i,j,k,k), R^A_2(i,j,k,k),R^A_3(i,j,k,k)\right) = (3,3,3), \qquad k,\ell = 1,2,3, \; k \neq \ell\; i\neq j,
$$
and
$$
\left(R^A_1(i,j,k,\ell), R^A_2(i,j,k,\ell),R^A_3(i,j,k,\ell)\right) = (2,2,2), \qquad k,\ell = 1,2,3, \; k \neq \ell\; i\neq j,
$$
for $i,j = 1,\ldots,4$. 
}

\begin{figure}[H]
\begin{center}
\includegraphics[scale=0.35]{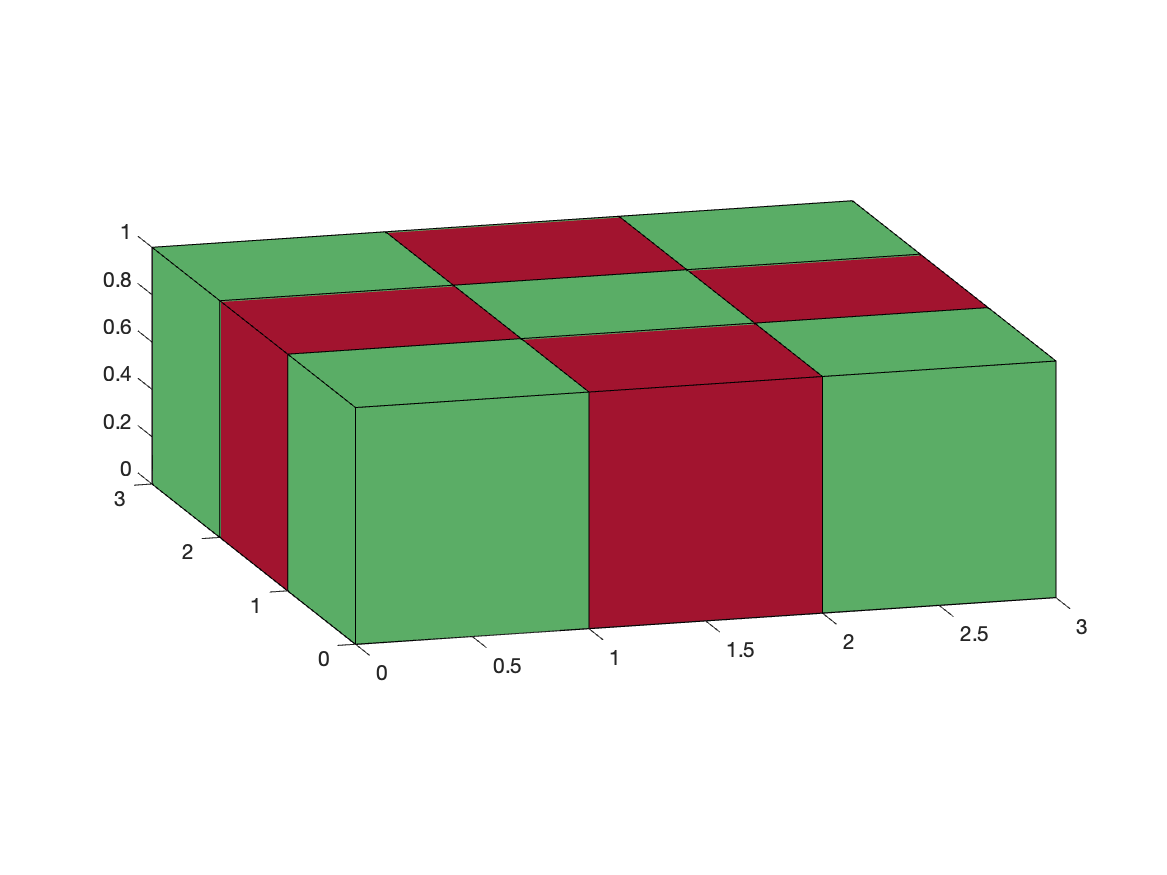}
  \caption{Thick square domain.}\label{fig:thick_square}
     \end{center}
\end{figure}

{\R1

In Table \ref{tab:its_thick_square} we report the numbers of iterations for degrees $p=3,4,5$ and number of elements per direction $n_{el}=2^L$ with $L=4,5,6,7$. The number of iterations is almost independent of degree and mesh-size.
 {\renewcommand\arraystretch{1.4} 
\begin{table}[H]
\begin{center}
\begin{tabular}{|c|c|c|c|}
\hline
 & \multicolumn{3}{|c|}{ \ Iteration number} \\
 \hline$n_{el}$ & $p=3$ &$p=4$  & $p=5$ \\
\hline   
16 &      68    &    65  &  66 \\ 
\hline
32 &     71    &  67   & 69  \\
\hline
64 &      71    &  70  &  72   \\ 
\hline
128 &       78     &   73 &   75  \\ 
\hline
\end{tabular}
\caption{Number of iterations for the thick square domain.}
\label{tab:its_thick_square}
\end{center}
\end{table}}

\R1
The maximum of the multilinear ranks of the solution in all the directions and for all the subdomains is shownn in Figure \ref{fig:rank_thick_square}. As the number of elements per direction $n_{el}$ grows, the multilinear rank of the solution increases, but, as represented in Figure \ref{fig:mem_thick_square}, the memory storage for the solution  hugely reduces with respect to the full solution.

\begin{figure}[H]
 \centering  
     \subfloat[][Maximum of the multilinear rank of the solution.\label{fig:rank_thick_square}] 
   {\includegraphics[scale=0.40]{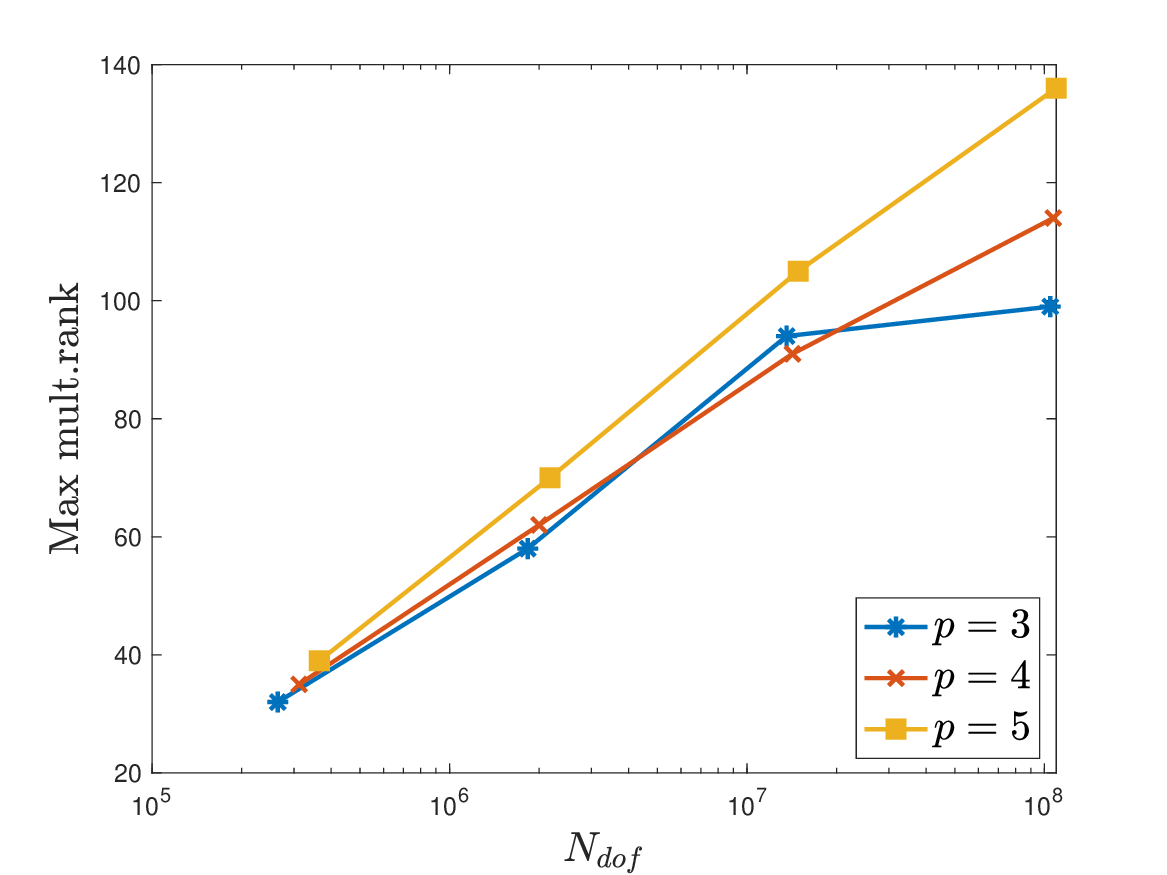}}\quad
 \subfloat[][Memory requirements.\label{fig:mem_thick_square}]
   {\includegraphics[scale=0.40]{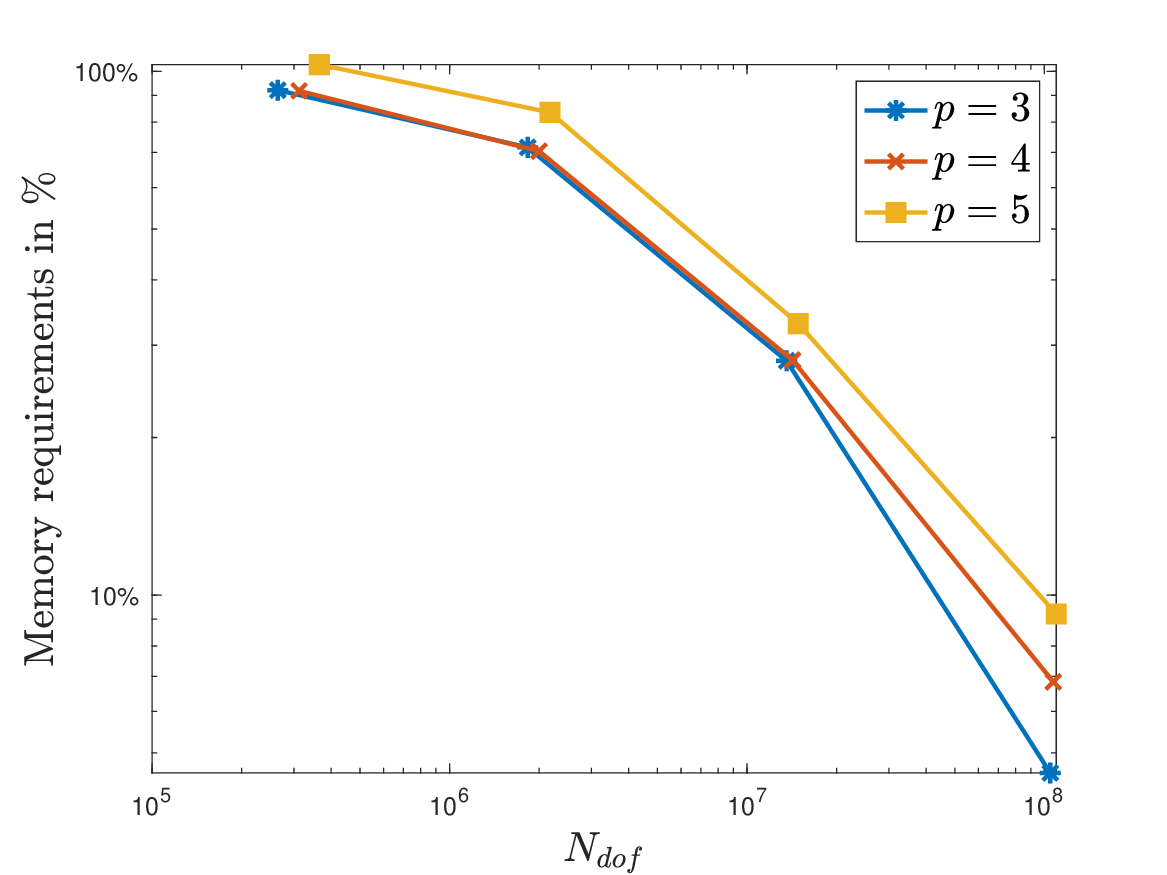}}
   \caption{Results for the thick square domain.}
 \label{fig:thick_square_results}
\end{figure}
}

\subsection{Thick ring domain}
In this test we consider a thick ring domain, represented in Figure \ref{fig:in_dom_ring}. 
We have four patches and four subdomains, represented by the union of green and yellow patches, yellow and blue patches, blue and red patches and red and green patches.

{\R1
For this domain, we preliminary perform a convergence test to assess that the numerical solution yielded by our approach is indeed a good approximation of the Galerkin solution. For this purpose, we consider a Poisson toy problem with homogeneous Dirichlet boundary conditions and source function $f(x,y,z)=\sin(4 \pi x) \sin(4 \pi y)  \sin( 4 \pi z)$. Note that the exact solution of this toy problem is $u(x,y,z) = -\left(48 \pi^2 \right)^{-1} f(x,y,z) $.
We solve the problem with a low-rank strategy analogous to that described previously (in particular we fix $10^{-10}$ as tolerance for TPCG) and then compute the $H^1$ and $L^2$ norms of the error for the obtained solution, for different discretization levels and polynomial degrees. Results are shown in Figure \ref{fig:Thick_results_error} and confirm the validity of our approach.

\begin{figure}[H]
 \centering  
     \subfloat[][$L^2$ error.\label{fig:L2_thick}] 
   {\includegraphics[scale=0.43]{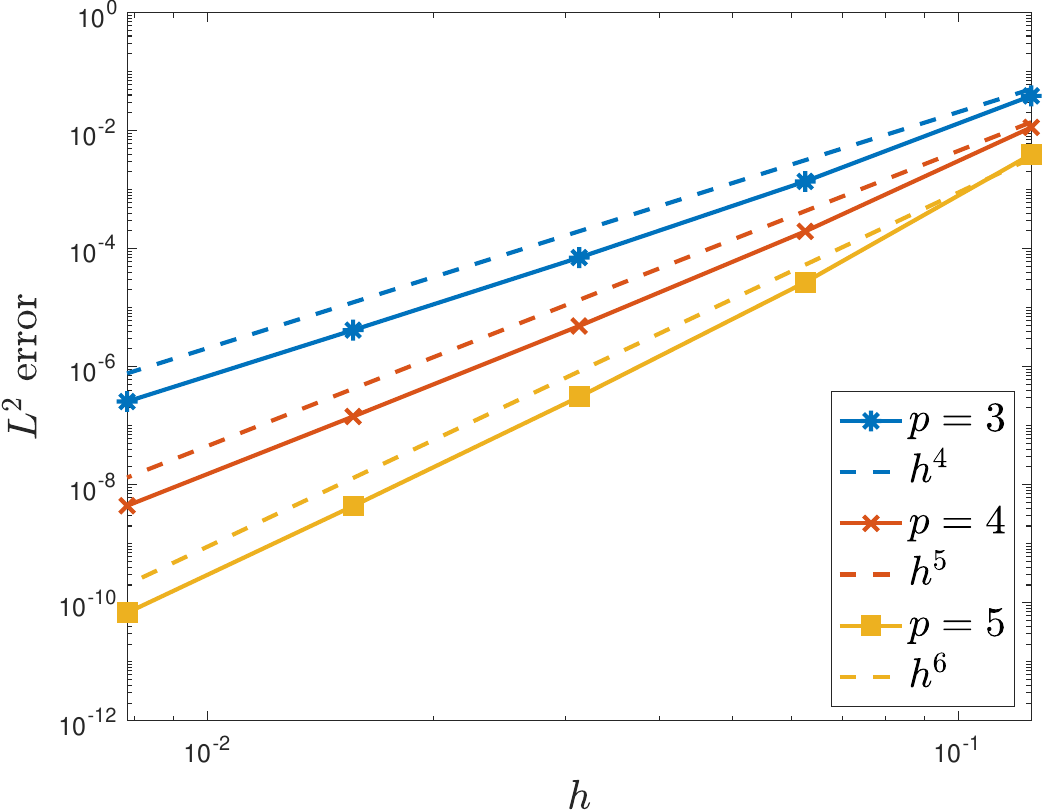}}\quad
 \subfloat[][$H^1$ error.\label{fig:H1_thick}]
   {\includegraphics[scale=0.43]{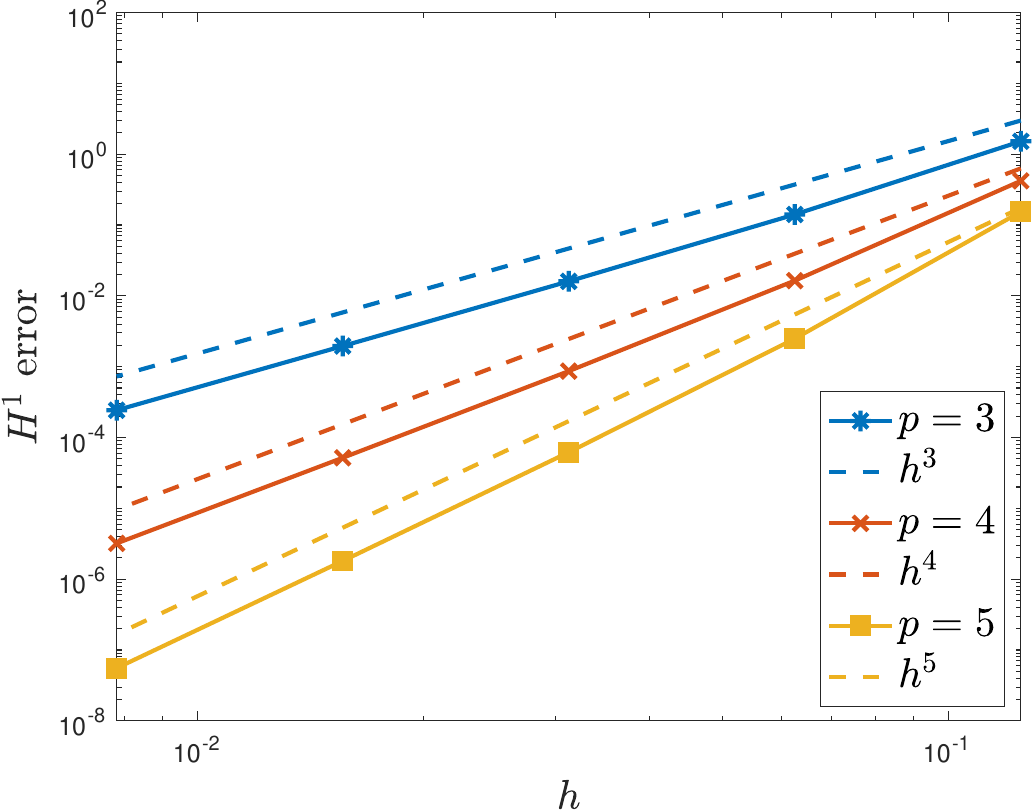}}
   \caption{Errors for a Poisson problem in the  thick ring domain.}
 \label{fig:Thick_results_error}
\end{figure}}

We  now solve a linear elasticity problem on the same domain. We
impose homogeneous Dirichlet boundary conditions on $\{z=0\}$ and in the central walls of the thick ring, while homogeneous Neumann boundary conditions are imposed elsewhere.

\begin{figure}[H]
\centering \includegraphics[scale=0.40]{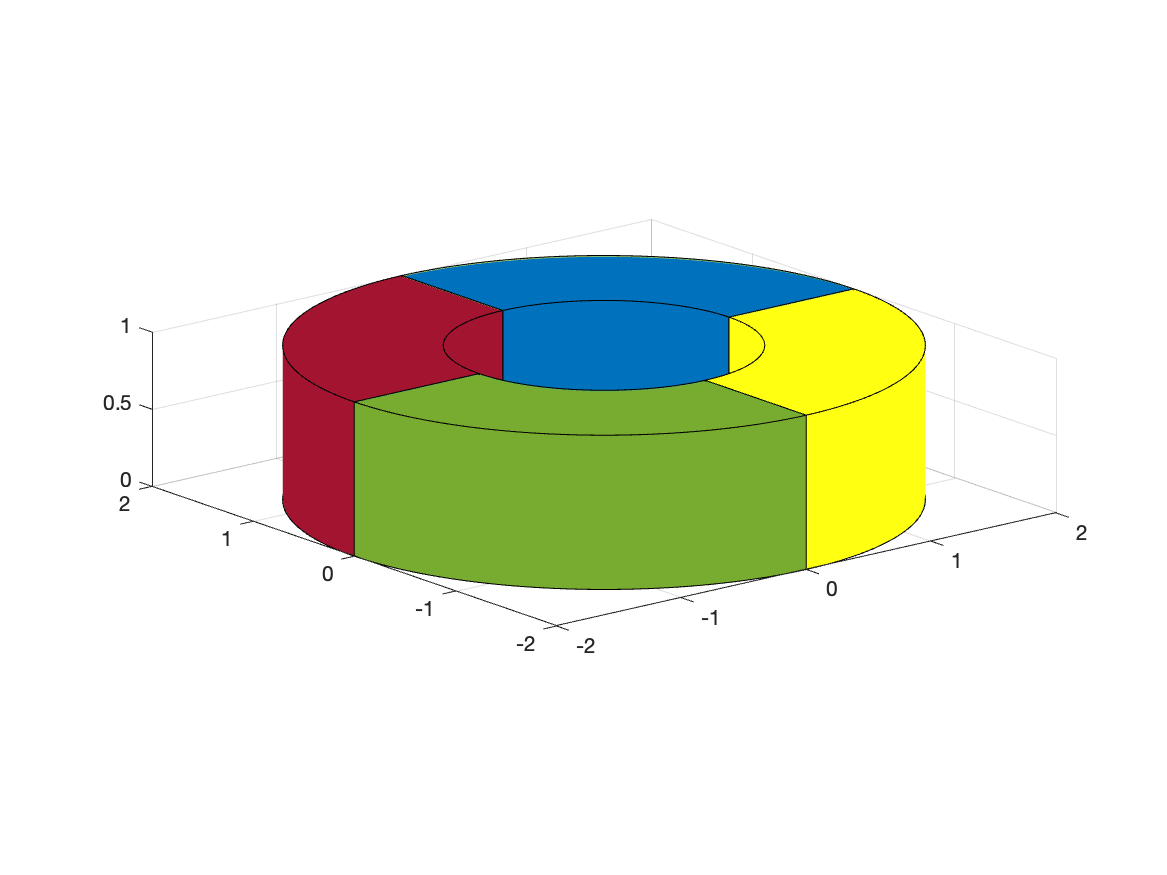}
  \caption{Thick ring domain.}\label{fig:in_dom_ring}
\end{figure}

For this domain, the multilinear ranks of the blocks of $\widetilde{\matb{A}}$ 
are
$$ \left(R^A_1(i,j,1,1), R^A_2(i,j,1,1),R^A_3(i,j,1,1)\right) =  \left(R^A_1(i,j,2,2), R^A_2(i,j,1,1),R^A_3(i,j,2,2)\right) = (5,5,5), $$
  $$ \left(R^A_1(i,j,3,3), R^A_2(i,j,3,3),R^A_3(i,j,3,3)\right) = (3,3,3) $$
and 
$$ \left( R^A_1(i,j,k,\ell), R^A_2(i,j,k,\ell),R^A_3(i,j,k,\ell)\right) =  (4,4,4), \qquad k,\ell = 1,2,3, \; k \neq \ell,$$
for $i,j = 1,2,3,4$ s.t. $\Theta^{(i)}\cap \Theta^{(j)}$ has non-zero measure.

{\R1 
The relative errors in 2-norm $\| \cdot \|_2$ of the approximation of the linear system matrix $\mathbf{A}$ by $\widetilde{\mathbf{A}}$ for  $p=3,4,5$ and  $n_{el}=8, 16$ are reported in Table \ref{tab:rel_err_ring}.
These computations require the assembly of both matrices, which forced us to consider  a limited number of elements per parametric direction, because of memory constraints. The resulting error is of the order of the {\tt Chebfun3F} tolerance, which we recall is set equal to $10^{-7}$ for all the considered cases.
 
 {  \renewcommand\arraystretch{1.4} 
\begin{table}[H]
\begin{center}
\begin{tabular}{|c|c|c|c|}
\hline
 & \multicolumn{3}{|c|}{ \ $\| \mathbf{A}-\widetilde{\mathbf{A}} \|_2 / \|\mathbf{A} \|_2$} \\
 \hline
 $n_{el}$ & $p=3$ &$p=4$  & $p=5$ \\  
 \hline
8   &  $ 4.9 \cdot 10^{-8}$  &  $ 3.6\cdot 10^{-8}$ & $  3.2 \cdot 10^{-8}$ \\
\hline
16  &  $  9.1 \cdot 10^{-8}$   &  $  8.3 \cdot 10^{-8}$ &  $ 7.6\cdot 10^{-8}$\\   
\hline
  
\end{tabular}
\caption{Relative errors of the approximation of $\mathbf{A}$ by $\widetilde{\mathbf{A}}$ in the thick ring domain.}
\label{tab:rel_err_ring}
\end{center}
\end{table}}
    }

Table \ref{tab:its_ring} collects the number of iterations for degrees
$p= 3,4,5$ and number of elements per patch per parametric direction
$n_{el}=2^{L}$ with $L=5,6,7,8$, as in the previous test.   The number
of iterations is independent of the meshsize $h$  and appears to be
stable with respect to the polynomial degree  $p$.

 {\renewcommand\arraystretch{1.4} 
\begin{table}[H]
\begin{center}
\begin{tabular}{|c|c|c|c|}
\hline
 & \multicolumn{3}{|c|}{ \ Iteration number} \\
 \hline$n_{el}$ &  $p=3$ &$p=4$  & $p=5$ \\
\hline  
32 &       42  &   39  &  40  \\
\hline
64 &      44   &  41   &    42  \\ 
\hline
128 &     50  &  46   &  46   \\ 
\hline
256 &      60   &   58  &  55   \\ 
\hline
\end{tabular}
\caption{Number of iterations for the thick ring domain.}
\label{tab:its_ring}
\end{center}
\end{table}}

We report in Figure \ref{fig:rank_ring} the maximum of the multilinear ranks of the solution in all the directions and for all the subdomains. 
As represented in Figure \ref{fig:mem_ring}, the memory storage for the solution is hugely reduced with respect to the full solution and reaches values around $1\%$ for the highest discretization level.

\begin{figure}[H]
 \centering  
     \subfloat[][Maximum of the multilinear rank of the solution.\label{fig:rank_ring}] 
   {\includegraphics[scale=0.43]{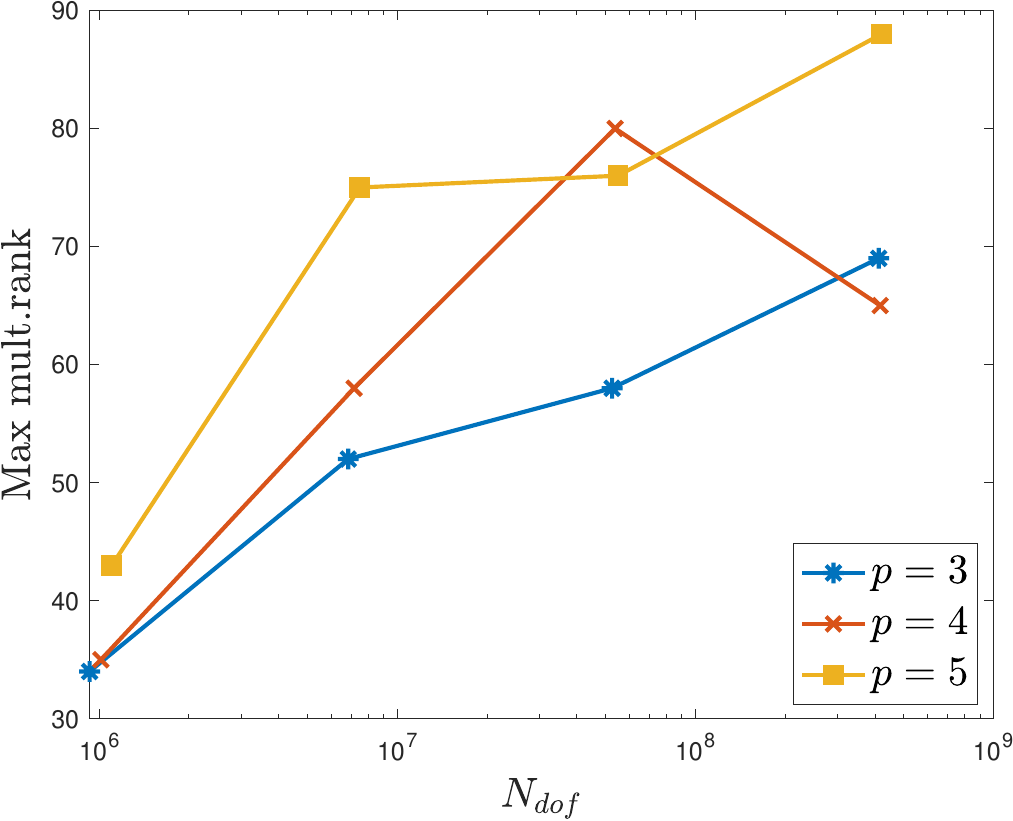}}\quad
 \subfloat[][Memory requirements.\label{fig:mem_ring}]
   {\includegraphics[scale=0.43]{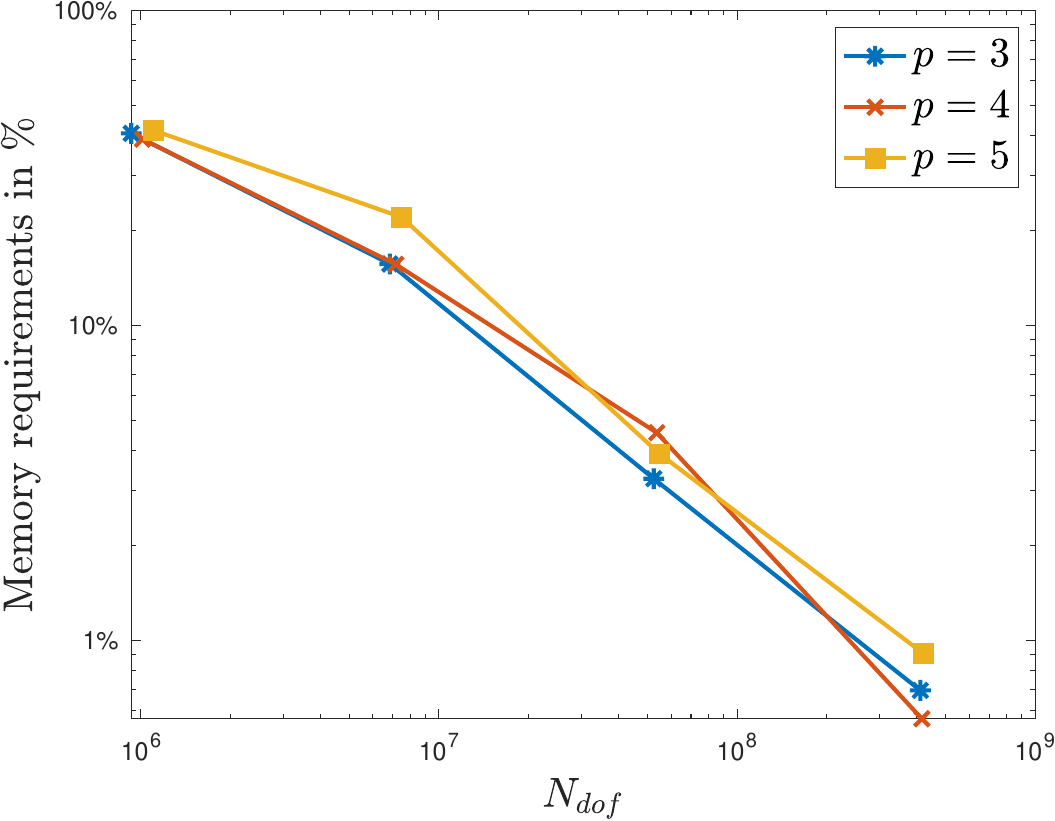}}
   \caption{Results for the thick ring domain.}
 \label{fig:ring_results}
\end{figure}

{
\subsection{Igloo-shaped domain}
In this test we consider an igloo-shaped domain, represented in Figure \ref{fig:igloo_domain}. As in the previous test case, we have four patches and four subdomains, represented by the union of green and yellow patches, yellow and blue patches, blue and red patches and red and green patches. We impose homogeneous Dirichlet boundary conditions on $\{z=0\}$ and the internal boundary, while homogeneous Neumann boundary conditions are imposed elsewhere.   

\begin{figure}[H]
\centering 
   \includegraphics[scale=0.40]{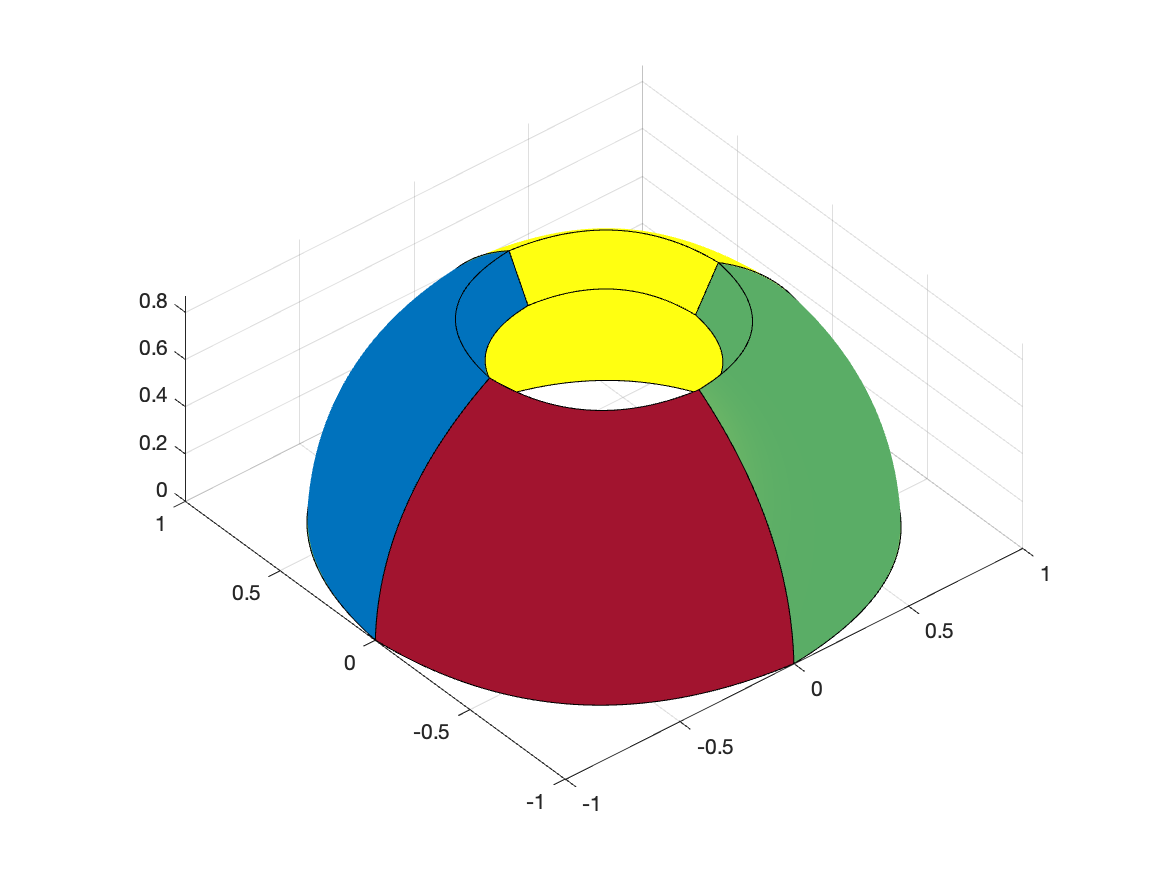} 
  \caption{Igloo-shaped domain.}\label{fig:igloo_domain}
\end{figure}

The multilinear ranks of the blocks of $\widetilde{\matb{A}}$ 
are
$$ \left(R^A_1(i,j,1,1), R^A_2(i,j,1,1),R^A_3(i,j,1,1)\right) =  \left(R^A_1(i,j,2,2), R^A_2(i,j,1,1),R^A_3(i,j,2,2)\right) = (13, 13, 9),$$
for $i,j=1,\dots,4$  s.t. $\Theta^{(i)}\cap \Theta^{(j)}$ has zero measure, while
  $$ \left(R^A_1(i,i,3,3), R^A_2(i,i,3,3),R^A_3(i,i,3,3)\right) = (13, 13, 9) \quad \text{for } i=1,\dots,4$$
   $$ \left(R^A_1(i,j,3,3), R^A_2(i,j,3,3),R^A_3(i,j,3,3)\right) = (12, 12, 9) \quad \text{for } i\neq j\text{ and } i,j=1,\dots,4,$$
   s.t. $\Theta^{(i)}\cap \Theta^{(j)}$ has non-zero measure. Finally,
$$ \left( R^A_1(i,j,k,l), R^A_2(i,j,k,l),R^A_3(i,j,k,l)\right) = (9,9,9), \qquad \text{if } (k,l)\in\{(1,2),(2,1)\},  $$
and
$$ \left( R^A_1(i,j,k,l), R^A_2(i,j,k,l),R^A_3(i,j,k,l)\right) = (8,8,8), \qquad \text{if } (k,l)\in\{(1,3),(3,1),(2,3),(3,2)\}  $$
 for $i,j=1,\dots,4$  s.t. $\Theta^{(i)}\cap \Theta^{(j)}$ has zero measure.
 } Also in this testcase, we report in Table \ref{tab:rel_err_igloo}  the relative errors in 2-norm $\| \cdot \|_2$ of the approximation of the linear system matrix $\mathbf{A}$ by $\widetilde{\mathbf{A}}$ for  $p=3,4$ and  $n_{el}=8, 16$. 
Even in this case, the approximation is of the order of the Chebfun tolerance, which is set equal to $10^{-7}$ for all the cases considered.

 {  \renewcommand\arraystretch{1.4} 
\begin{table}[H]
\begin{center}
\begin{tabular}{|c|c|c|}
\hline
 & \multicolumn{2}{|c|}{ \ $\| \mathbf{A}-\widetilde{\mathbf{A}} \|_2 / \|\mathbf{A} \|_2$} \\
 \hline
 $n_{el}$ & $p=3$ &$p=4$    \\  
 \hline
8   &  $3.7\cdot 10^{-8}$  &  $3.3\cdot 10^{-8}$  \\
\hline
16  &  $7.0\cdot 10^{-8}$   & $ 6.7 \cdot 10^{-8}$   \\   
\hline
  
\end{tabular}
\caption{Relative errors of the approximation of $\mathbf{A}$ by $\widetilde{\mathbf{A}}$ in the igloo-shaped domain.}
\label{tab:rel_err_igloo}
\end{center}
\end{table}}

In Table \ref{tab:its_igloo} we reported the number of iterations for degrees $p = 3,4$ and a number of elements per parametric direction $n_{el}=2^L$ with $L=3,4,5,6$.
Note that the maximum values of $L$ and $p$ are  slightly smaller than in the two previous tests. This is due to the higher multilinear ranks of this geometry. Nevertheless, note that the number of degrees of freedom for the finest discretizations level is higher than 7 millions.

 {  \renewcommand\arraystretch{1.4} 
\begin{table}[H]
\begin{center}
\begin{tabular}{|c|c|c|}
\hline
 & \multicolumn{2}{|c|}{ Iteration number} \\
 \hline 
 $n_{el}$ &   $p=3$ &  $p=4$    \\
 \hline
8  & \quad 47 \ \ \  & 43  \\
\hline  
16 &     \quad    49 \ \ \   &  45   \\  
\hline
32 &    \quad   52  \ \ \  &   51  \\
\hline
64 &    \quad   59 \ \ \   &  58    \\  
\hline
\end{tabular}
\caption{Number of iterations for the igloo-shaped  domain.}
\label{tab:its_igloo}
\end{center}
\end{table}}
{
The maximum of the multilinear ranks of the solution in all the directions and for all the subdomains are represented in Figure \ref{fig:rank_igloo}. 
The gain in terms of storage may seem less significant than in the previous cases, but it is in fact very similar if the comparison is done for the same discretization level. For this problem, on the finest discretization level, the memory storage of the low-rank solution is less than 35\% that of the full solution.

}

\begin{figure}[H]
 \centering  
     \subfloat[][Maximum of the multilinear rank of the solution.\label{fig:rank_igloo}] 
   {\includegraphics[scale=0.4]{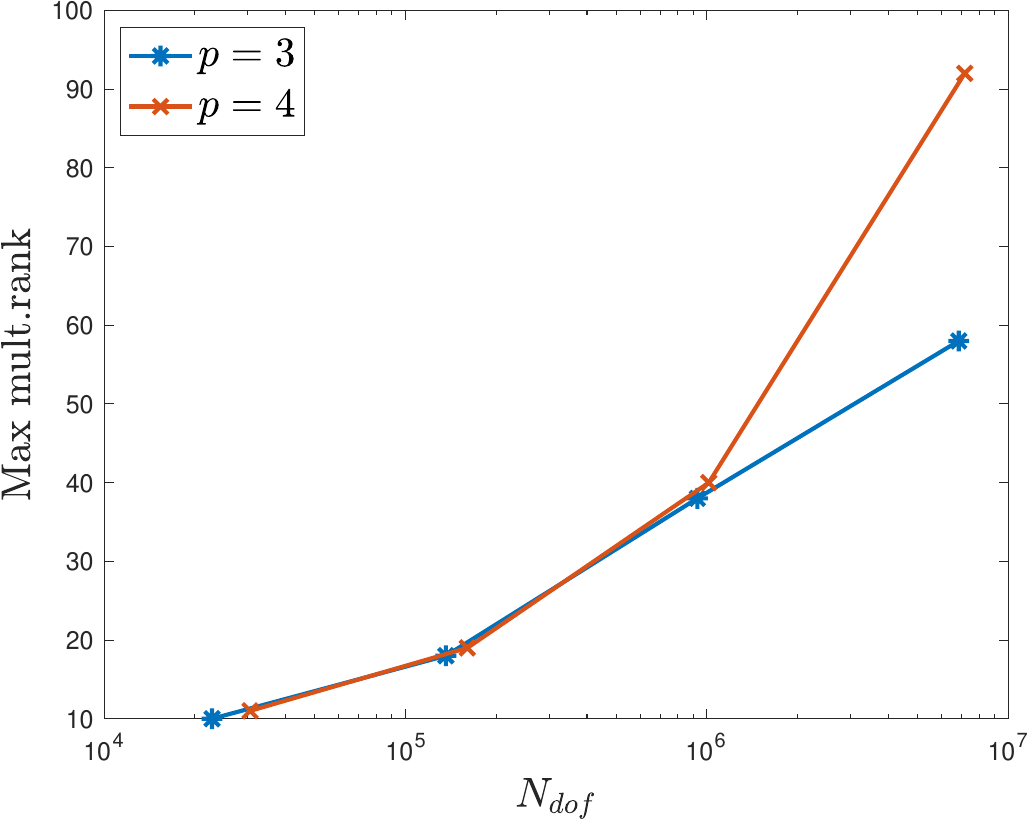}}\quad
 \subfloat[][Memory requirements.\label{fig:mem_igloo}]
   {\includegraphics[scale=0.4]{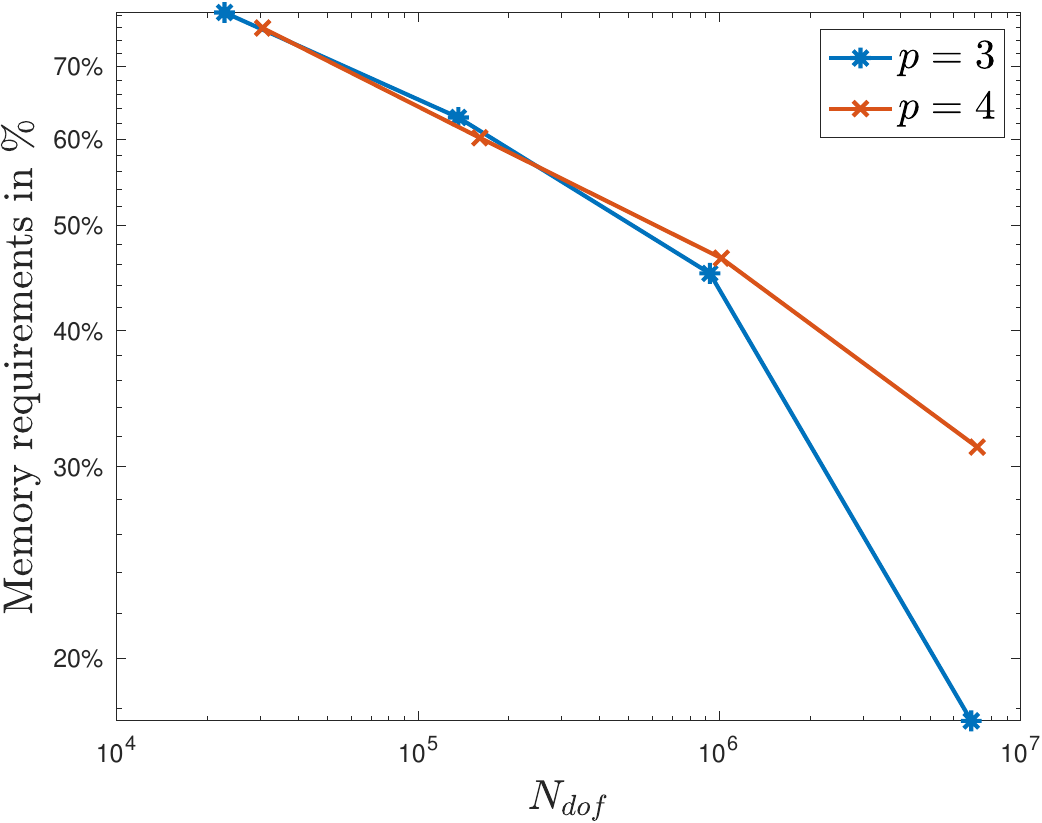}}
   \caption{Results for the igloo-shaped domain.}
 \label{fig:igloo_results}
\end{figure}

\section{Conclusions}
\label{sec:conclusions}

In this work, we have considered the isogeometric discretization of a compressible linear elasticity problem. 
We have extended the low-rank solution method presented \cite{Montardini2023}, which is based on a low-rank Tucker representation of vectors and matrices, to conforming multipatch discretizations.
This solver exploits the domain decomposition paradigm,  where the subdomains are built from union of adjoining patches.
%
%
Each block of the system matrix and each block of the right hand side
vector is approximated by a Tucker matrix and a Tucker vector,
respectively. 
In particular, we showed an upper bound for the relative error of the matrix stemming from this approximation.
The resulting singular linear system is solved by a truncated preconditioned conjugate 
 gradient method. The designed preconditioner has a block-diagonal
 structure where each block is a Tucker matrix and its application
 exploits the fast diagonalization method and the fast Fourier
 transform. 

We performed numerical tests assessing a low memory storage, almost
 independent of the degree $p$, and the number of  iterations
 independent of $p$ and the mesh-size. 
 The problems considered are simple academic
 benchmarks, and we emphasize  that the effectiveness of
 these techniques is greater when the rank of the considered
 approximation of the problem's components (the geometry, the
 operator coefficients, the forcing term, and therefore the solution)
 is lower. The promising results obtained suggest further exploring,  in
 future work, the potential of the proposed method in realistic
 applications. The method  relies on the linearity
 of the problem, however, nonlinear problems could be addressed by
 combining outer Newton-type solvers with inner solvers of the
 proposed type for the linearized step.

Our tests also  demonstrate that our procedure achieves the optimal
accuracy expected by the isogeometric scheme. 
However, this paper does
not delve into the theoretical error study. {\GS To demonstrate an error
estimate for the solution for the error stemming from the use of a
low-rank approximation of the data and the differential operator of
the problem, as well as from the low-rank approximation of the
variables constructed by the iterative solver is of primary importance
to provide the mathematical justification for the use of low-rank
techniques applied to solutions of partial differential equations in
general, not just in the multipatch case. A complete  error theory
of this kind is not yet available and represents a central aspect of
the theoretical development of these techniques.
 The topic warrants further investigation and will be the focus of future researches.
}

\section*{Acknowledgments}

The authors are members of the Gruppo Nazionale Calcolo
Scientifico-Istituto Nazionale di Alta Matematica (GNCS-INDAM), { the
first author was partially supported by INDAM-GNCS Project  
``Sviluppo di metodi numerici innovativi ed efficienti per la risoluzione di PDE'', while the third author was partially supported by INDAM-GNCS Project  ``Nodi, ascisse e punti: scegliere e usare in maniera efficiente''.   The second author  acknowledges support from
PNRR-M4C2-I1.4-NC-HPC-Spoke6.  The third author  acknowledges the contribution of the Italian Ministry of
University and Research (MUR) through the PRIN project COSMIC
(No. 2022A79M75).  The first author acknowledges support from the PRINN 2022 PNRR project HEXAGON (No. P20227CTY3). The second and the third author acknowledge support from    the PRIN 2022 PNRR project NOTES
(No. P2022NC97R)}.

\bibliographystyle{plain}
 \bibliography{biblio_tensor_solver}

\end{document}